\newtheorem{thm}{Theorem}
\newtheorem{cor}[thm]{Corollary}
\newtheorem{lem}[thm]{Lemma}
\newtheorem{prop}[thm]{Proposition}
\numberwithin{equation}{section}
 \DeclareMathOperator{\RE}{Re}
 \DeclareMathOperator{\IM}{Im}
 \newcommand{\M}{\mathcal{M}}
 \newcommand{\K}{\mathcal{K}}
 \newcommand{\Real}{\mathbb{R}}
 \newcommand{\Complex}{\mathbb{C}}
 \newcommand{\set}[1]{\left\{#1\right\}}
 \newcommand{\seq}[1]{\left<#1\right>}
 \newcommand{\norm}[1]{\left\Vert#1\right\Vert}
\author{ Aik. Aretaki and J. Maroulas\footnote{Department of Mathematics, National\, Technical \,University \,of
Athens, Zografou Campus, Athens 15780, Greece. E-mail address:
maroulas@math.ntua.gr.}}
\title{The higher rank numerical range\\ of matrix polynomials}
\begin{document}
\maketitle
\begin{abstract}
The notion of the higher rank numerical range $\Lambda_{k}(L(\lambda))$ for matrix polynomials
$L(\lambda)=A_{m}\lambda^{m}+\ldots+A_{1}\lambda+A_{0}$ is introduced here and
some fundamental geometrical properties are investigated. Further, the sharp points of $\Lambda_{k}(L(\lambda))$ are
defined and their relation to the nume\-rical range $w(L(\lambda))$ is presented. A connection of $\Lambda_{k}(L(\lambda))$ with
the vector-valued higher rank numerical range $\Lambda_{k}(A_{0}, \ldots, A_{m})$ is also discussed.
\end{abstract}
{\small\textbf{Key words}: higher rank numerical range, matrix polynomials, quantum error correction.\\
\textit{AMS Subject Classifications:} 15A60, 15A90, 81P68.}


\section{Introduction}
Let $\M_{n}(\Complex)$ be the algebra of matrices $A=[a_{ij}]_{i,j=1}^{n}$
with entries $a_{ij}\in \Complex$ and
\[
L(\lambda)=A_{m}\lambda^{m}+A_{m-1}\lambda^{m-1}+\ldots +A_{1}\lambda+A_{0}
\]
be a matrix polynomial with $A_{i}\in\M_{n}(\Complex)$ and $A_{m}\neq 0$. For a positive integer $k\geq 1$, we define the
\textit{higher rank numerical range} of $L(\lambda)$ as
\begin{equation}\label{rel1}
\Lambda_{k}(L(\lambda))=\set{\lambda\in\Complex : PL(\lambda)P=0_{n}\,\, for\,\, some\,\, P\in\mathcal{P}_{k}},
\end{equation}
where $\mathcal{P}_{k}$ is the set of all  orthogonal projections $P$ of $\Complex^{n}$ onto any $k$-dimensional
subspace $\K$ of $\Complex^{n}$. Equivalently,
\begin{equation}\label{re11}
\Lambda_{k}(L(\lambda))=\set{\lambda\in\Complex : Q^{*}L(\lambda)Q=0_{k}\,\, for\,\, some\,\, Q\in\M_{n,k}\,\,with\,\,Q^{*}Q=I_{k}},
\end{equation}
since $P=QQ^{*}$, with $Q\in\M_{n,k}(\Complex)$ and $Q^{*}Q=I_{k}$.
In case $k=1$, the set reduces to the well known \emph{numerical range} $w(L(\lambda))$ of $L(\lambda)$ \cite{Li-Rodman}
\begin{equation}\label{rel2}
\Lambda_{1}(L(\lambda))\equiv w(L(\lambda))=\set{\lambda\in\Complex : x^{*}L(\lambda)x=0\,\, for\,\, some\,\, x\in\Complex^{n}, \norm x=1}.
\end{equation}
The \eqref{rel1} (or \eqref{re11}) is an interesting generalization of numerical range, since matrix polynomials play a significant role in several
problems of computational chemistry and structural molecular biology \cite{Emiris}. They consist algebraic tools to computing all
conformations of ring molecules and  they model va\-rious problems in terms of polynomial equations.

If $L(\lambda)=I\lambda-A$, then
\begin{eqnarray}\label{rel3}
 \nonumber\Lambda_{k}(I\lambda-A) & = & \set{\lambda\in\Complex : PAP=\lambda P\,\, for\,\, some\,\, P\in\mathcal{P}_{k}}\\
    & = & \set{\lambda\in\Complex : Q^{*}AQ=\lambda I_{k}\,,\,\,Q^{*}Q=I_{k},\,\,Q\in \M_{n,k}(\Complex)},
\end{eqnarray}
namely, it coincides with the \emph{higher rank numerical range} of a matrix $A\in\M_{n}$. The concept of higher rank numerical range
has been studied extensively by Choi \textit{et al} in \cite{Choi,C-H-K-Z,C-K-Z-q,C-K-Z}
and later by other researchers in \cite{Poon-Li-Sze,Li-Sze,Hugo}. We should note that for $k=1$, $\Lambda_{k}(I\lambda-A)$ yields
the classical  \emph{numerical range} of a matrix $A$, i.e.
\begin{equation}\label{rel4}
F(A)=\set{ x^{*}Ax : x\in \Complex^{n}, \norm x=1},
\end{equation}
whose basic properties can be found in ~\cite{Rao,Halmos,H.J.T}.

A multi-dimensional higher rank numerical range is the
\emph{joint higher rank numerical range} \cite{Li-Poon}
\begin{equation}\begin{split}\label{rel5}
\Lambda_{k}(\textbf{A})&=\set{(\mu_{0}, \mu_{1}, \ldots, \mu_{m})\in\Complex^{m+1} : \exists\,\, P\in\mathcal{P}_{k}\,\, such \,\,\,that\,\,
\right.\\
&\quad\quad\quad\quad\quad\quad\quad\quad\quad\quad\quad\quad\quad\left.PA_{i}P=\mu_{i}P,\, i=0, \ldots, m},
\end{split}\end{equation}
where $\textbf{A}=(A_{0}, A_{1}, \ldots, A_{m})$ is an $(m+1)$-tuple of matrices $A_{i}\in\M_{n}(\Complex)$ for $i=0, \ldots, m$.
Apparently, for $k=1$, $\Lambda_{1}(\mathbf{A})$ is identified with the \emph{joint numerical range}, denoted by
\begin{equation}\label{rel6}
w(\mathbf{A})=\set{(x^{*}A_{0}x, \ldots, x^{*}A_{m}x) : x\in\Complex^{n}, \norm x=1}.
\end{equation}
In the context of quantum information theory, $\Lambda_{k}(\textbf{A})$ is closely
related to a quantum error correcting code, since the latter exists as long as the joint higher rank numerical range
associated with the error operators of a noisy quantum channel is a non empty set.

In section 2, we present some familiar properties of $\Lambda_{k}(L(\lambda))$ and we provide a description of the set
through intersections of numerical ranges of all compressions of the matrix polynomial $L(\lambda)$ to $(n-k+1)$-dimensional
subspaces. This study originates from an analogous expression for matrices, presented and proved in \cite{Aret}.
It also motivates us to investigate the geometry of $\Lambda_{k}(L(\lambda))$ proving  conditions for its
boundedness and elaborating a basic property on the number of its connected components.

In section 3, a connection of the boundary points of $\Lambda_{k}(L(\lambda))$
with respect to the boundary points of $w(L(\lambda))$ is considered. Particularly, introducing the notion of \textit{sharp points}
for $\Lambda_{k}(L(\lambda))$, we show that a sharp point of $w(A\lambda-B)$ with algebraic multiplicity $k$ with respect to the spectrum
$\sigma(A\lambda -B)$ is also a sharp point of
$\Lambda_{j}(A\lambda-B)$, for $j=2, \ldots, k$.
In se\-ction 4, a relationship between $\Lambda_{k}(L(\lambda))$ and $\Lambda_{k}(C_{L}(\lambda))$ is presented, where $C_{L}(\lambda)$ is
the companion polynomial of $L(\lambda)$. Also, we treat a sufficient condition for boundary points of $w(\textbf{A})$ to be boundary points of
$\Lambda_{k}(\textbf{A})$, where $\textbf{A}=(A_{0}, \ldots, A_{m})$ and evenly, we investigate an interplay of
$\Lambda_{k}(L(\lambda))$ and $\Lambda_{k}(\textbf{A})$.

\section{Geometrical Properties}
In the beginning of this section, we present some basic properties as in \cite{Li-Sze} for the higher rank numerical range of a matrix polynomial
$L(\lambda)$.
\begin{prop}
Let $L(\lambda)=\sum_{j=0}^{m}{A_{j}\lambda^{j}}$ be a matrix polynomial, where $A_{m}\neq 0$, then
\begin{description}
  \item[(a)] $\Lambda_{k}(L(\lambda))$ is closed in $\Complex$.
  \item[(b)] For any $\alpha\in\Complex$, $\Lambda_{k}(L(\lambda+\alpha))=\Lambda_{k}(L(\lambda))-\alpha$.
  \item[(c)] If $Q(\lambda)=\sum_{j=0}^{m}{A_{m-j}\lambda^{j}}$ then $\Lambda_{k}(Q(\lambda))\setminus\set{0}=\set{\mu^{-1}:
  \mu \in \Lambda_{k}(L(\lambda))}$.
  \item[(d)] If $A_{i}$, $i=0,\ldots ,m$ have a common totally isotropic subspace $\mathcal{S}=span\set{x_{1}, \ldots, x_{k}}$ with orthonormal
  vectors $x_{j}\in\Complex^{n}$, $j=1, \ldots, k$, i.e. $x_{l}^{*}A_{i}x_{j}=0$ for any $l,j=1, \ldots, k$ and $i=0, \ldots, m$, then
  $\Lambda_{k}(L(\lambda))=\Complex$.
\end{description}
\end{prop}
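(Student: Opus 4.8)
The plan is to verify each of the four items essentially from the defining equations \eqref{rel1}–\eqref{re11}, using the projection/isometry duality $P = QQ^{*}$ throughout.

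For (a), I would show the complement is open, or equivalently that $\Lambda_k(L(\lambda))$ is sequentially closed: take $\lambda_r \in \Lambda_k(L(\lambda))$ with $\lambda_r \to \lambda_0$, and pick $Q_r \in \M_{n,k}$ with $Q_r^{*}Q_r = I_k$ and $Q_r^{*}L(\lambda_r)Q_r = 0_k$. Since the set $\{Q \in \M_{n,k} : Q^{*}Q = I_k\}$ (a Stiefel manifold) is compact, pass to a convergent subsequence $Q_{r_j} \to Q_0$, which still satisfies $Q_0^{*}Q_0 = I_k$. Because $Q \mapsto Q^{*}L(\mu)Q$ is jointly continuous in $(Q,\mu)$ — $L$ being polynomial in $\mu$ — we get $Q_0^{*}L(\lambda_0)Q_0 = 0_k$, so $\lambda_0 \in \Lambda_k(L(\lambda))$. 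This is the one place a (small) topological argument is needed; everything else is algebraic.

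For (b), observe that $L(\lambda+\alpha)$ is again a matrix polynomial in $\lambda$, and $P L(\lambda + \alpha) P = 0_n$ holds for some $P \in \Polar[k]$ exactly when $PL(\mu)P = 0_n$ for $\mu = \lambda + \alpha$; hence $\lambda \in \Lambda_k(L(\lambda+\alpha)) \iff \lambda + \alpha \in \Lambda_k(L(\lambda))$, which is the claimed translation. For (c), note $Q(\lambda) = \lambda^{m} L(1/\lambda)$ for $\lambda \neq 0$: if $\mu \in \Lambda_k(L(\lambda))$ with $\mu \neq 0$ and $P L(\mu) P = 0_n$, then $P Q(\mu^{-1}) P = \mu^{-m}\, P L(\mu) P = 0_n$, so $\mu^{-1} \in \Lambda_k(Q(\lambda))$; conversely if $\nu \in \Lambda_k(Q(\lambda))$, $\nu \neq 0$, then $P L(\nu^{-1}) P = \nu^{-m} P Q(\nu) P = 0_n$ gives $\nu^{-1} \in \Lambda_k(L(\lambda))$. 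Thus $\Lambda_k(Q(\lambda)) \setminus \{0\} = \{\mu^{-1} : \mu \in \Lambda_k(L(\lambda)),\ \mu \neq 0\}$; since $0 \notin \{\mu^{-1} : \mu \in \Lambda_k(L(\lambda))\}$ automatically, this is the stated identity (one should remark that the hypothesis $A_m \neq 0$ is what makes $Q(\lambda)$ a genuine matrix polynomial of degree $\leq m$ with nonzero leading-or-lower structure, but the equivalence does not otherwise need it).

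For (d), let $Q = [x_1 \ \cdots \ x_k] \in \M_{n,k}$, so $Q^{*}Q = I_k$ by orthonormality. The totally isotropic hypothesis says precisely $Q^{*}A_i Q = 0_k$ for every $i = 0,\dots,m$. Then for any $\lambda \in \Complex$,
\[
Q^{*}L(\lambda)Q = \sum_{j=0}^{m} \lambda^{j}\, Q^{*}A_j Q = \sum_{j=0}^{m} \lambda^{j}\, 0_k = 0_k,
\]
so by \eqref{re11} every $\lambda \in \Complex$ lies in $\Lambda_k(L(\lambda))$, i.e. $\Lambda_k(L(\lambda)) = \Complex$. I do not anticipate a serious obstacle in any of these; the only item requiring care beyond bookkeeping is the compactness/continuity argument in (a), and in (c) one must be slightly careful about the point $0$ and about degenerate cases (e.g. $A_0 = 0$, where $0$ could itself belong to $\Lambda_k(L(\lambda))$ — but then $0^{-1}$ is simply not claimed to be in $\Lambda_k(Q(\lambda))$, consistent with the ``$\setminus\{0\}$'' on the left).
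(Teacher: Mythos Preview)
The paper does not actually prove this proposition: it is stated at the opening of Section~2 as a list of basic properties ``as in \cite{Li-Sze}'' and no proof is supplied. Your proposal is therefore not being compared against an existing argument but is filling in an omitted routine verification --- and it does so correctly. The compactness-of-isometries argument for (a), the substitution arguments for (b) and (c) via $Q(\lambda)=\lambda^{m}L(\lambda^{-1})$, and the direct computation for (d) are all sound; your caveat about the role of $0$ in (c) is apt, since the statement as written implicitly assumes $\mu\neq 0$ on the right-hand side.
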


\begin{prop}\label{pr2}
Let $L(\lambda)=\sum_{j=0}^{m}{A_{j}\lambda^{j}}$ be a matrix polynomial, the following are equivalent:
\begin{description}
  \item[(i)] $\mu\in\Lambda_{k}(L(\lambda))$
  \item[(ii)] there exists $M\in\M_{n,k}(\Complex)$ with $rankM=k$ such that $M^{*}L(\mu)M=0_{k}$
  \item[(iii)] there exists an $L(\mu)$-orthogonal $k$-dimensional subspace $\mathcal{K}$ of $\Complex^{n}$
  \item[(iv)] there exist $\set{u_{i}}_{i=1}^{k}$ orthonormal vectors such that $u_{j}^{*}L(\mu)u_{i}=0$ for every $i,j =1, \ldots, k$
  \item[(v)] there exists a $k$-dimensional subspace $\mathcal{K}$ of $\Complex^{n}$ such that $v^{*}L(\mu)v=0$ for every $v\in\mathcal{K}$
  \item[(vi)] there exists a unitary matrix $U\in\M_{n}(\Complex)$ such that $$U^{*}L(\mu)U=\begin{bmatrix}
                                                                                0_{k} & L_{1}(\mu) \\
                                                                                L_{2}(\mu) & L_{3}(\mu) \\
                                                                              \end{bmatrix},$$
 where $L_{1}(\lambda), L_{2}(\lambda)$ and $L_{3}(\lambda)$ are suitable  matrix polynomials.
\end{description}
\end{prop}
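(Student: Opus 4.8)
The plan is to prove the chain of equivalences by cycling through the statements in a convenient order rather than proving $(i)\Leftrightarrow(j)$ separately for each pair. The natural route is $(i)\Rightarrow(ii)\Rightarrow(iii)\Rightarrow(iv)\Rightarrow(v)\Rightarrow(vi)\Rightarrow(i)$, since most of these implications are essentially unpackings of definitions once one keeps careful track of what a projection, an isometry, and an orthonormal basis each encode. The only genuinely substantive equivalence is the one linking the ``block-zero'' condition $Q^{*}L(\mu)Q=0_k$ with the ``every vector is isotropic'' condition in $(v)$, so I would organize the write-up to isolate that point.

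First I would record the translation between $(i)$ and $(ii)$: by \eqref{re11}, $\mu\in\Lambda_k(L(\lambda))$ means $Q^{*}L(\mu)Q=0_k$ for some $Q\in\M_{n,k}$ with $Q^{*}Q=I_k$, which in particular has rank $k$, giving $(i)\Rightarrow(ii)$. For $(ii)\Rightarrow(iii)$, given $M$ with $\operatorname{rank}M=k$ and $M^{*}L(\mu)M=0_k$, the column space $\mathcal K=\operatorname{ran}M$ is $k$-dimensional and $L(\mu)$-orthogonal in the sense that $u^{*}L(\mu)v=0$ for all $u,v\in\mathcal K$ (write $u=Mx$, $v=My$ and expand; note the off-diagonal entries of the $k\times k$ zero matrix $M^*L(\mu)M$ are exactly these bilinear forms). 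Then $(iii)\Rightarrow(iv)$ is immediate by choosing any orthonormal basis $\{u_i\}_{i=1}^k$ of $\mathcal K$, and $(iv)\Rightarrow(v)$ follows by bilinearity: every $v\in\mathcal K=\operatorname{span}\{u_i\}$ is a linear combination $v=\sum c_i u_i$, so $v^{*}L(\mu)v=\sum_{i,j}\bar c_j c_i\, u_j^{*}L(\mu)u_i=0$. The step $(v)\Rightarrow(vi)$ is where I would be most careful: take an orthonormal basis $u_1,\dots,u_k$ of the subspace $\mathcal K$ from $(v)$, extend it to an orthonormal basis $u_1,\dots,u_n$ of $\Complex^n$, and let $U=[u_1\ \cdots\ u_n]$; the top-left $k\times k$ block of $U^{*}L(\mu)U$ has $(j,i)$ entry $u_j^{*}L(\mu)u_i$, and one must observe that $(v)$ — via the polarization-type identity applied to $v=u_i$, $v=u_j$, and $v=u_i+u_j$, and also $v = u_i + \mathrm{i}\, u_j$ — forces all of these to vanish, not merely the diagonal ones. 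Finally $(vi)\Rightarrow(i)$: partition $U=[Q\ \ \widetilde Q]$ with $Q\in\M_{n,k}$; then $Q^{*}Q=I_k$ and $Q^{*}L(\mu)Q$ is precisely the displayed $0_k$ block, so $\mu\in\Lambda_k(L(\lambda))$ by \eqref{re11}.

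The main obstacle, such as it is, is the bilinear-form bookkeeping in $(v)\Rightarrow(vi)$ (equivalently the content hidden in $(iv)\Leftrightarrow(v)$): a quadratic-form condition $v^{*}L(\mu)v=0$ on a whole subspace is a priori weaker-looking than the full bilinear vanishing $u_j^{*}L(\mu)u_i=0$, and one needs the standard polarization argument over $\Complex$ — evaluating the quadratic form on $u_i+u_j$ and on $u_i+\mathrm{i}u_j$ — to recover the off-diagonal terms. Since $L(\mu)$ is an arbitrary (not necessarily Hermitian) matrix, I would make sure to use both the real and the imaginary polarization combinations so that both $u_j^{*}L(\mu)u_i$ and $u_i^{*}L(\mu)u_j$ are pinned down. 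Everything else is routine, and the equivalences $(i)$–$(iii)$ are just the statement that a rank-$k$ matrix, an isometry onto a $k$-dimensional subspace, and that subspace itself are interchangeable data; I would state that once and not belabor it.
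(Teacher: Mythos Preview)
Your chain $(i)\Rightarrow(ii)\Rightarrow(iii)\Rightarrow(iv)\Rightarrow(v)\Rightarrow(vi)\Rightarrow(i)$ is correct, and you have identified the one non-formal step accurately: passing from the quadratic-form vanishing in $(v)$ to the full bilinear vanishing needed for the $0_k$ block in $(vi)$ genuinely requires polarization over $\Complex$, and your use of both $u_i+u_j$ and $u_i+\mathrm{i}u_j$ is exactly what is needed since $L(\mu)$ is not assumed Hermitian. One small point you leave implicit in $(v)\Rightarrow(vi)$: because your $U$ depends only on $\mu$ and not on $\lambda$, the blocks of $U^{*}L(\lambda)U$ are automatically matrix polynomials in $\lambda$, so the clause about $L_1(\lambda),L_2(\lambda),L_3(\lambda)$ in $(vi)$ is satisfied.

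The paper, however, takes a shorter route: it observes that $\mu\in\Lambda_k(L(\lambda))$ is equivalent to $0\in\Lambda_k(L(\mu))$ for the fixed matrix $L(\mu)$, and then simply invokes Proposition~1.1 of Choi--Giesinger--Holbrook--Kribs, which records exactly these equivalences for $0\in\Lambda_k(A)$. So the paper's argument is a one-line reduction plus a citation, whereas yours is a self-contained direct proof that in effect reproduces the content of that cited proposition. Your approach buys independence from the external reference and makes the polarization step visible; the paper's approach buys brevity and situates the result as an immediate corollary of the matrix case.
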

\begin{proof}
The arguments (i)-(vi) are equivalent, since $\mu\in\Lambda_{k}(L(\lambda))$ is equivalent to $0\in\Lambda_{k}(L(\mu))$.
Further, we refer to the Proposition 1.1 in \cite{Choi}.
\end{proof}

\begin{prop}\label{pr3}
Let $L(\lambda)=\sum_{j=1}^{m}{A_{j}\lambda^{j}}$, then
\[
\Lambda_{k}(L(\lambda))\subseteq \Lambda_{k-1}(L(\lambda))\subseteq\ldots\subseteq\Lambda_{1}(L(\lambda)).
\]
\end{prop}
\begin{proof}
For any $j\in\set{2, \ldots, k}$, let $\mu_{0}\in\Lambda_{j}(L(\lambda))$. Then $0\in\Lambda_{j}(L(\mu_{0}))\subseteq\Lambda_{j-1}(L(\mu_{0}))$
and consequently, by $0\in \Lambda_{j-1}(L(\mu_{0}))$, we conclude that $\mu_{0}\in\Lambda_{j-1}(L(\lambda))$.
\end{proof}
\begin{cor}\label{cor4}
Let $L(\lambda)$ be an $n\times n$ matrix polynomial. Then for any $k\leq n$
\[
\Lambda_{k}(\underbrace{L(\lambda)\oplus\ldots\oplus L(\lambda)}_{k})=w(L(\lambda)),
\]
i.e. $\Lambda_{k}(\oplus_{k}L(\lambda))$ is a non-empty set.
\end{cor}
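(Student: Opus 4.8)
The plan is to write $\widetilde{L}(\lambda)=L(\lambda)\oplus\cdots\oplus L(\lambda)$ ($k$ summands), an $nk\times nk$ matrix polynomial, to identify $\Complex^{nk}$ with the orthogonal sum of $k$ copies of $\Complex^{n}$ (so that a vector is written in blocks $v=(v_{1},\dots,v_{k})$ with $v_{l}\in\Complex^{n}$), and to prove the two inclusions of the asserted equality separately. For $w(L(\lambda))\subseteq\Lambda_{k}(\widetilde{L}(\lambda))$ I would take $\mu\in w(L(\lambda))$, choose a unit vector $x\in\Complex^{n}$ with $x^{*}L(\mu)x=0$, and let $u_{i}\in\Complex^{nk}$ be the vector having $x$ in its $i$-th block and $0$ elsewhere, $i=1,\dots,k$. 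These vectors are orthonormal, and a block computation gives $u_{j}^{*}\widetilde{L}(\mu)u_{i}=\delta_{ij}\,x^{*}L(\mu)x=0$ for all $i,j$; by Proposition~\ref{pr2}(iv) this shows $\mu\in\Lambda_{k}(\widetilde{L}(\lambda))$.

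For the reverse inclusion, let $\mu\in\Lambda_{k}(\widetilde{L}(\lambda))$. By Proposition~\ref{pr2}(iv) there are orthonormal $u_{1},\dots,u_{k}\in\Complex^{nk}$ with $u_{j}^{*}\widetilde{L}(\mu)u_{i}=0$ for all $i,j$; in particular, writing $u_{1}=(y_{1},\dots,y_{k})$, we get $0=u_{1}^{*}\widetilde{L}(\mu)u_{1}=\sum_{l=1}^{k}y_{l}^{*}L(\mu)y_{l}$. Since $\sum_{l}\|y_{l}\|^{2}=1$, normalizing the nonzero blocks exhibits $0$ as a convex combination, with weights $\|y_{l}\|^{2}$, of numbers $z^{*}L(\mu)z$ where $z$ is a unit vector of $\Complex^{n}$, i.e.\ of points of the classical numerical range $F(L(\mu))$. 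By the Toeplitz--Hausdorff theorem $F(L(\mu))$ is convex, hence $0\in F(L(\mu))$, that is, $\mu\in w(L(\lambda))$. Combining the two inclusions gives $\Lambda_{k}(\widetilde{L}(\lambda))=w(L(\lambda))$. (Equivalently one can quote Proposition~\ref{pr3} to get $\Lambda_{k}(\widetilde{L}(\lambda))\subseteq\Lambda_{1}(\widetilde{L}(\lambda))=w(\widetilde{L}(\lambda))$ and then argue $w(\widetilde{L}(\lambda))\subseteq w(L(\lambda))$ in the same way.) This reverse inclusion is the only delicate point: termwise vanishing of the blocks $y_{l}^{*}L(\mu)y_{l}$ need not hold, and it is precisely the convexity of the numerical range that lets one pass from the convex combination back to a single unit vector $x$ with $x^{*}L(\mu)x=0$.

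Finally, for non-emptiness: since $A_{m}\neq0$ there is a unit vector $x_{0}\in\Complex^{n}$ with $x_{0}^{*}A_{m}x_{0}\neq0$, so $p(\mu)=x_{0}^{*}L(\mu)x_{0}=\sum_{j=0}^{m}(x_{0}^{*}A_{j}x_{0})\,\mu^{j}$ is a scalar polynomial of exact degree $m\geq1$ and hence has a root $\mu_{0}\in\Complex$; then $x_{0}^{*}L(\mu_{0})x_{0}=0$, so $\mu_{0}\in w(L(\lambda))$, and therefore $\Lambda_{k}(\widetilde{L}(\lambda))=w(L(\lambda))\neq\emptyset$. (Non-emptiness of $w(L(\lambda))$ can alternatively be cited from \cite{Li-Rodman}.)
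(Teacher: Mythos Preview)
Your proof is correct. It follows the same two-inclusion skeleton as the paper's argument, but where the paper outsources both directions to known matrix facts, you carry them out by hand: for $w(L(\lambda))\subseteq\Lambda_{k}(\widetilde L(\lambda))$ the paper invokes the inclusion $\bigcap_{k}F(L(\mu_{0}))\subseteq\Lambda_{k}(\oplus_{k}L(\mu_{0}))$ from \cite{C-K-Z}, which is precisely your ``place $x$ in each diagonal block'' construction; for the reverse inclusion the paper uses Proposition~\ref{pr3} together with the identity $F(\oplus_{k}L(\mu_{0}))=F(L(\mu_{0}))$, which is the same convexity argument you write out explicitly (and which you also note in your parenthetical). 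So the content is essentially the same; your version is simply more self-contained and makes the role of Toeplitz--Hausdorff visible, while the paper's is shorter because it can cite prior work. Your added proof of $w(L(\lambda))\neq\emptyset$ is a nice touch the paper leaves implicit.
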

\begin{proof}
Due to Proposition \ref{pr3},
$\mu_{0}\in\Lambda_{k}(\oplus_{k} L(\lambda))\subseteq w(\oplus_{k} L(\lambda))$. Hence
$0\in F(\oplus_{k} L(\mu_{0}))=F(L(\mu_{0}))$, equivalently $\mu_{0}\in w(L(\lambda))$ and then we obtain
$\Lambda_{k}(\oplus_{k} L(\lambda))\subseteq w(L(\lambda))$.
In addition, $\mu_{0}\in w(L(\lambda))\Rightarrow 0\in F(L(\mu_{0}))=\cap_{k}F(L(\mu_{0}))\subseteq\Lambda_{k}(\oplus_{k} L(\mu_{0}))$,
according to a relation in \cite{C-K-Z}. Thus
$w(L(\lambda))\subseteq \Lambda_{k}(\oplus_{k} L(\lambda))$ and the proof is established.
\end{proof}
The following result sketches the higher rank numerical range of a \emph{square matrix} through numerical ranges.
\begin{thm}\label{pr4}
Let $A\in\M_{n}(\Complex)$. Then
\[
\Lambda_{k}(A)=\bigcap_{M}{F(M^{*}AM)},
\]
where $M$ is any $n\times(n-k+1)$ isometry.
\end{thm}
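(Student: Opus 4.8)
The plan is to prove the two inclusions separately, translating the defining condition $PAP=\lambda P$ into a statement about numerical ranges of compressions.

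First I would establish $\Lambda_k(A)\subseteq\bigcap_M F(M^*AM)$. Suppose $\mu\in\Lambda_k(A)$, so by \eqref{rel3} there is $Q\in\M_{n,k}(\Complex)$ with $Q^*Q=I_k$ and $Q^*AQ=\mu I_k$; equivalently the columns of $Q$ span a $k$-dimensional subspace $\K$ on which $A$ acts like $\mu I$ after compression. Now let $M$ be any $n\times(n-k+1)$ isometry, with column space $\mathcal{R}(M)$. Since $\dim\mathcal{R}(M)+\dim\K=(n-k+1)+k=n+1>n$, the two subspaces intersect nontrivially: pick a unit vector $x\in\mathcal{R}(M)\cap\K$. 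Write $x=My$ with $\|y\|=1$. Then $y^*(M^*AM)y=x^*Ax$, and since $x\in\K$ we have $x^*Ax=\mu$ (here one uses that $PAP=\mu P$ forces $x^*Ax=\mu$ for every unit $x\in\K$, which follows from $x=Px$ and $Px^*APx$... more carefully, $x^*Ax = x^*PAPx = \mu x^*Px = \mu$). Hence $\mu\in F(M^*AM)$ for every such $M$, giving the first inclusion.

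Next I would prove the reverse inclusion $\bigcap_M F(M^*AM)\subseteq\Lambda_k(A)$, which is the harder direction. Suppose $\mu\in F(M^*AM)$ for every $n\times(n-k+1)$ isometry $M$. By Proposition~\ref{pr2}(applied via $0\in\Lambda_k(A-\mu I)$, using Proposition~\ref{pr2}(iv)/(v)) it suffices to produce a $k$-dimensional subspace $\K$ with $v^*(A-\mu I)v=0$ for all $v\in\K$, equivalently $v^*Av=\mu\|v\|^2$ on $\K$. The natural strategy is an inductive/greedy construction: after replacing $A$ by $A-\mu I$ assume $\mu=0$, so the hypothesis reads $0\in F(M^*AM)$ for every such $M$. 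One builds orthonormal $u_1,\dots,u_k$ with $u_j^*Au_i=0$ for all $i,j$. Having chosen orthonormal $u_1,\dots,u_r$ with $r<k$ spanning this isotropic-type relation, let $V_r=\mathrm{span}\{u_1,\dots,u_r\}^\perp$, of dimension $n-r\ge n-k+1$; one wants to find a further vector $u_{r+1}\in V_r$, orthogonal to the previous ones, with $u_{r+1}^*Au_{r+1}=0$ and $u_j^*Au_{r+1}=u_{r+1}^*Au_j=0$. The point is that the compression of $A$ to $V_r$ (an $(n-r)$-dimensional space) still has $0$ in the higher rank numerical range $\Lambda_{k-r}$ of its compression, which one deduces from the hypothesis because every $(n-k+1)$-dimensional isometry into $V_r$ extends to one of $\Complex^n$; then invoke the known matrix result (the $C$-numerical-range / Choi–Kribs–Życzkowski characterization, or simply cite \cite{Aret} as the paper does) to peel off one more dimension.

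The main obstacle is exactly this reverse inclusion: showing that the family of one-dimensional conditions ``$\mu\in F(M^*AM)$ for all codimension-$(k-1)$ compressions'' can be bundled into a single $k$-dimensional common-zero subspace. The cleanest route is to reduce to the analogous theorem for matrices already in the literature — the paper explicitly says this ``originates from an analogous expression for matrices, presented and proved in \cite{Aret}'' — so I would structure the argument as: (1) the elementary dimension-count inclusion above, then (2) for the converse, reduce to $\mu=0$ by part (b)-type translation, and cite the matrix version in \cite{Aret} (or reprove it by the inductive compression argument sketched above, where the induction step uses that $0\in F$ of every maximal compression of the smaller matrix, which itself is implied by the hypothesis). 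I expect essentially no difficulty in direction (1) or in the reduction steps; the substance is entirely in the known matrix identity, so the proof in the paper is likely short and defers to \cite{Aret}.
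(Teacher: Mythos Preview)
The paper does not actually prove this theorem: it is stated without proof, the result being attributed (via the introduction) to the authors' own presentation \cite{Aret}. Your closing expectation that ``the proof in the paper is likely short and defers to \cite{Aret}'' is therefore exactly right --- in fact it is entirely deferred.

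Your argument for the inclusion $\Lambda_{k}(A)\subseteq\bigcap_{M}F(M^{*}AM)$ via the dimension count $\dim\mathcal{R}(M)+\dim\K=n+1>n$ is correct and is the natural one.

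A word of caution about your inductive sketch for the reverse inclusion, since you also float it as an alternative to simply citing \cite{Aret}. Having chosen $u_{1},\dots,u_{r}$ with $u_{i}^{*}Au_{j}=0$ for all $i,j\le r$, passing to the compression of $A$ on $V_{r}=\mathrm{span}\{u_{1},\dots,u_{r}\}^{\perp}$ and finding an isotropic direction (or even a $(k-r)$-dimensional isotropic subspace) there does \emph{not} by itself give the required cross conditions $u_{j}^{*}Au_{r+1}=0=u_{r+1}^{*}Au_{j}$: orthogonality in $\Complex^{n}$ does not force $A$-orthogonality. So ``peel off one dimension at a time from the orthogonal complement'' is not a complete strategy as stated; one needs a more refined choice (or a different global argument, e.g.\ the Li--Sze half-plane intersection description of $\Lambda_{k}(A)$) to close this. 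Since your actual proposal is to cite \cite{Aret}, matching the paper, this gap in the sketch is not fatal to your plan, but you should not present the induction as a self-contained fallback.
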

The preceding expression of $\Lambda_{k}(A)$  indicates the ''\emph{convexity of $\Lambda_{k}(A)$}'' in another way, since
the Toeplitz-Hausdorff theorem ensures that each $F(M^{*}AM)$ is convex.
For $k=n$, clearly $\Lambda_{n}(A)=\bigcap_{x\in\Complex^{n}, \norm x=1}F(x^{*}Ax)$ and should be $\Lambda_{n}(A)\neq\emptyset$
\emph{precisely} when $A$ is scalar.

By Theorem \ref{pr4}, we may also describe $\Lambda_{k}(A)$ as intersections of circular discs as in \cite{Bonsall, Bon-Dun-II}, i.e.
\[
\Lambda_{k}(A)=\bigcap_{M}\set{\bigcap_{\gamma\in\Complex}{\mathcal{D}\left(\gamma,\norm{M^{*}AM-\gamma I_{n-k+1}}_{2}\right)}}.
\]

Since $\Lambda_{k}(I\lambda-A)$ is identified with the higher rank numerical range of a matrix $A\in\M_{n}(\Complex)$, Theorem \ref{pr4}
paves also the way for a characterization of $\Lambda_{k}(L(\lambda))$, demonstrated in the next proposition.
\begin{prop}\label{pr5}
Suppose $L(\lambda)=\sum_{j=1}^{m}{A_{j}\lambda^{j}}$, then
\[
\Lambda_{k}(L(\lambda))=\bigcap_{M}{w(M^{*}L(\lambda)M)}=\bigcup_{N}{\Lambda_{k}(N^{*}L(\lambda)N)},
\]
where $M\in\M_{n,n-k+1}(\Complex)$, $N\in\M_{n,k}(\Complex)$ are isometries.
\end{prop}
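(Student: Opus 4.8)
The plan is to establish the two equalities separately, in each case reducing the statement about the matrix polynomial $L(\lambda)$ at a fixed point $\mu$ to the corresponding statement about the matrix $L(\mu)$, and then invoking Theorem \ref{pr4} together with the elementary fact (used repeatedly above) that $\mu\in\Lambda_{k}(L(\lambda))$ if and only if $0\in\Lambda_{k}(L(\mu))$.

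For the first equality, $\Lambda_{k}(L(\lambda))=\bigcap_{M}w(M^{*}L(\lambda)M)$, I would argue pointwise. Fix $\mu\in\Complex$. By definition $\mu\in w(M^{*}L(\lambda)M)$ for every isometry $M\in\M_{n,n-k+1}(\Complex)$ exactly when $0\in F\bigl(M^{*}L(\mu)M\bigr)$ for every such $M$; indeed, $w(M^{*}L(\lambda)M)$ consists of those $\mu$ with $y^{*}M^{*}L(\mu)My=0$ for some unit $y\in\Complex^{n-k+1}$, i.e. $0\in F(M^{*}L(\mu)M)$. Thus $\mu\in\bigcap_{M}w(M^{*}L(\lambda)M)$ iff $0\in\bigcap_{M}F(M^{*}L(\mu)M)$, which by Theorem \ref{pr4} applied to the matrix $L(\mu)$ is precisely $0\in\Lambda_{k}(L(\mu))$, i.e. $\mu\in\Lambda_{k}(L(\lambda))$. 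This disposes of the first identity cleanly.

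For the second equality, $\Lambda_{k}(L(\lambda))=\bigcup_{N}\Lambda_{k}(N^{*}L(\lambda)N)$ over isometries $N\in\M_{n,k}(\Complex)$, the inclusion $\supseteq$ is the easy direction: if $\mu\in\Lambda_{k}(N^{*}L(\lambda)N)$ then $N^{*}L(\mu)N$ is a $k\times k$ matrix with $0$ in its rank-$k$ numerical range, so (taking $Q=I_{k}$ in \eqref{re11}, or directly) $N^{*}L(\mu)N=0_{k}$, which by Proposition \ref{pr2}(ii) with $M=N$ gives $\mu\in\Lambda_{k}(L(\lambda))$. For $\subseteq$, suppose $\mu\in\Lambda_{k}(L(\lambda))$; by \eqref{re11} there is an isometry $Q\in\M_{n,k}(\Complex)$ with $Q^{*}L(\mu)Q=0_{k}$. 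Take $N=Q$: then $N^{*}L(\mu)N=0_{k}$ is the $k\times k$ zero matrix, and $\Lambda_{k}$ of a $k\times k$ matrix $B$ is nonempty iff $B$ is scalar (as noted after Theorem \ref{pr4}), with $\Lambda_{k}(0_{k})=\{0\}$; hence here we want $\mu\in\Lambda_{k}(N^{*}L(\lambda)N)$, and $\mu$ lies in this set precisely because the matrix $N^{*}L(\mu)N$ vanishes. So $\mu\in\bigcup_{N}\Lambda_{k}(N^{*}L(\lambda)N)$.

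The only genuine subtlety I anticipate is bookkeeping about the degenerate case $k=n-k+1$, i.e. ensuring the two parametrizing families of isometries and the invocation of Theorem \ref{pr4} remain consistent when $n-k+1\le k$, and making sure that in the second identity one does not accidentally need $\Lambda_k$ of a $k\times k$ block to be the full numerical range rather than a single point; the resolution is simply that the relevant block is the \emph{zero} matrix, whose $k$-th rank numerical range is $\{0\}$, so the point $\mu$ is recovered exactly. All other steps are routine translations between the polynomial and constant-matrix formulations via $\mu\in\Lambda_k(L(\lambda))\Leftrightarrow 0\in\Lambda_k(L(\mu))$ and Proposition \ref{pr2}.
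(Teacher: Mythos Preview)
Your argument is correct and, for the first identity, identical in spirit to the paper's: both reduce to the fixed matrix $L(\mu)$ via the equivalence $\mu\in\Lambda_{k}(L(\lambda))\Leftrightarrow 0\in\Lambda_{k}(L(\mu))$ and then invoke Theorem~\ref{pr4}.

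For the second identity there is a small difference in presentation. The paper does not argue the two inclusions directly; instead it quotes the matrix-level identity $\Lambda_{k}(A)=\bigcup_{N}\Lambda_{k}(N^{*}AN)$ from \cite{Aret} and applies the same pointwise reduction $\mu\leftrightarrow L(\mu)$ as before. Your proof, by contrast, supplies that matrix identity on the spot: you use the observation that for a $k\times k$ matrix $B$ one has $0\in\Lambda_{k}(B)$ iff $B=0_{k}$, which makes both inclusions immediate from the defining relation \eqref{re11}. This is a perfectly valid (and arguably more self-contained) route, and your worry about the ``degenerate'' case is unfounded: nothing in either direction requires $n-k+1>k$, and the $k\times k$ block you produce is literally $0_{k}$, so there is no tension between $\Lambda_{k}$ and the full numerical range.
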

\begin{proof}
Obviously, by Theorem \ref{pr4}
$$\mu_{0}\in\Lambda_{k}(L(\lambda)) \Leftrightarrow 0\in\Lambda_{k}(L(\mu_{0}))\Leftrightarrow$$
$$0\in\bigcap_{M}{F(M^{*}L(\mu_{0})M)}\Leftrightarrow\mu_{0}\in\bigcap_{M}{w(M^{*}L(\lambda)M)}.$$
Evenly, considering the equation
$\Lambda_{k}(A)=\bigcup_{N}{\Lambda_{k}(N^{*}AN)}$ \cite{Aret}, we have
\[
\mu_{0}\in\Lambda_{k}(L(\lambda))\Leftrightarrow 0\in\Lambda_{k}(L(\mu_{0}))\Leftrightarrow
\]
\[
0\in\bigcup_{N}{\Lambda_{k}(N^{*}L(\mu_{0})N)}\Leftrightarrow\mu_{0}\in\bigcup_{N}{\Lambda_{k}(N^{*}L(\lambda)N)}.
\]
\end{proof}
We should note that Proposition \ref{pr5} provides us  an estimation of the boundary of $\Lambda_{k}(L(\lambda))$ through
the numerical approximation of the nume\-rical range $w(L(\lambda))$. Although the higher rank numerical range
$\Lambda_{k}(I\lambda-A)$ is always connected and convex \cite{Li-Sze,Hugo}, $\Lambda_{k}(L(\lambda))$ need \emph{not satisfy}
these properties, as we will see in the next example.\\
\textbf{Example 1.}
Let
\[
L(\lambda)=3I_{5}\lambda^{3}+\left[\begin{smallmatrix}
                               1 & 2 & 3 & 4 & 5 \\
                               0 & -1 & -2 & -3 & -4 \\
                               i & 2i & 3i & 4i & 5i \\
                               -2 & 1 & 2 & 1 & 2 \\
                               0.3 & 0 & 0 & 0 & 0 \\
                              \end{smallmatrix}\right]\lambda^{2}+\left[\begin{smallmatrix}
                                                     1 & 2 & 0 & 0 & 0 \\
                                                     2 & 3 & 4 & 0 & 0 \\
                                                     0 & 4 & 5 & 6 & 0 \\
                                                     0 & 0 & 6 & 7 & 8 \\
                                                     0 & 0 & 0 & 7 & 8 \\
                                                   \end{smallmatrix}\right]\lambda+\left[
                                                   \begin{smallmatrix}
                                                                        4 & -i & 1 & 0 & -2 \\
                                                                        i & 2i & -6i & 1& 0 \\
                                                                          0 & 1 & 4 & 2 & 0 \\
                                                                        -i & 3i & 0 & 2 & 4 \\
                                                                          3 & 1 & 2 & 4 & 5 \\
\end{smallmatrix}\right]
\]
\begin{center}
    \includegraphics[width=0.35\textwidth]{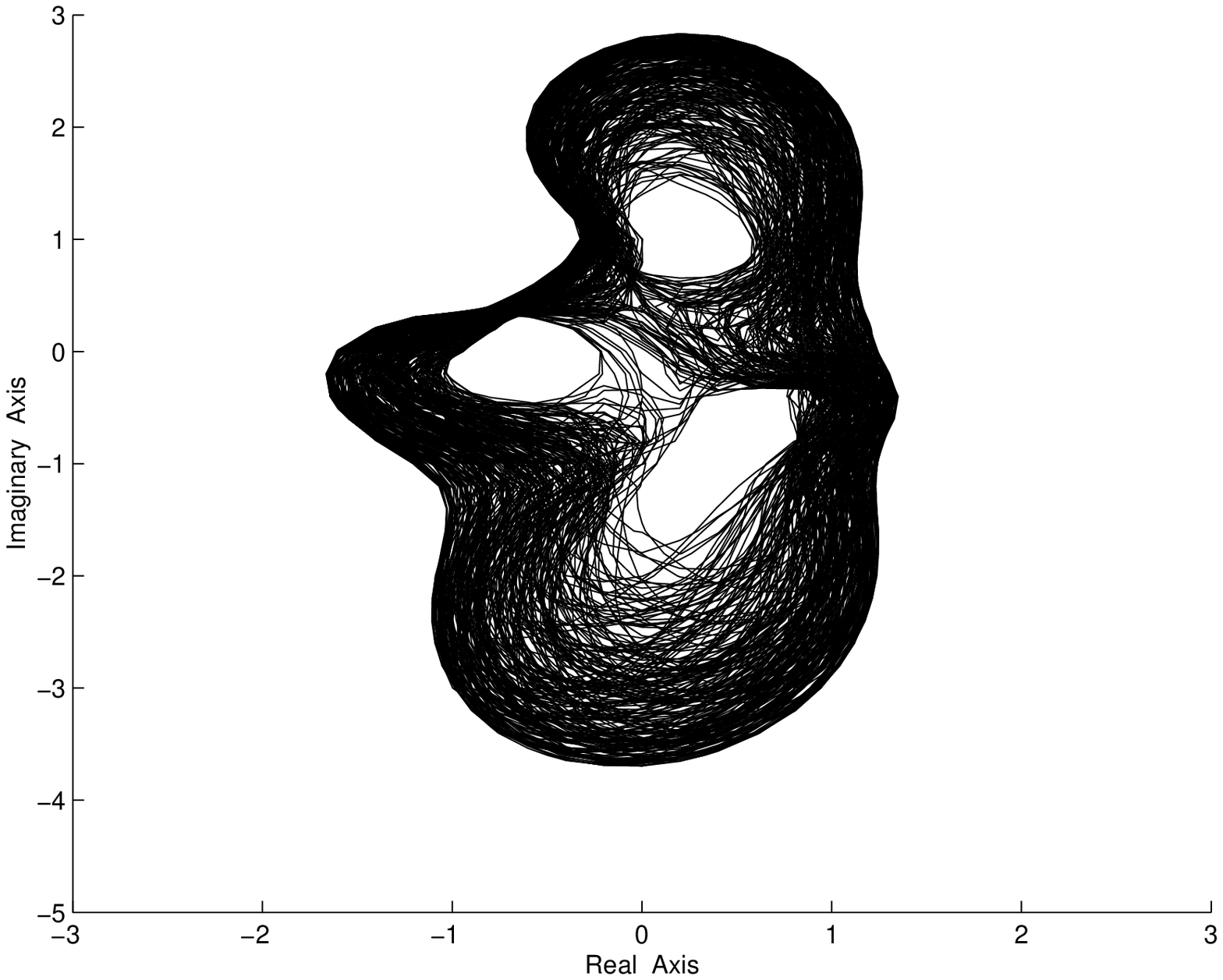}                                              
\end{center}
The intersection of the numerical ranges $w(M^{*}L(\lambda)M)$ by 400 randomly chosen $5\times4$ isometries $M$, approximates the set
$\Lambda_{2}(L(\lambda))$ and it is illustrated by the areas of ''white'' holes inside the figure. Note that all figure constitutes
the nume\-rical range  $w(L(\lambda))$.

Investigating the non emptyness of $\Lambda_{k}(L(\lambda))$, it is noticed  that  the ne\-cessary and sufficient
condition $n\geq 3k-2$ for $\Lambda_{k}(A)\neq\emptyset$ of $A\in\M_{n}$ \cite{Poon-Li-Sze} fails  in general for matrix polynomials,
as shown in the next two results. The first proposition refers to the emptyness  of the set $\Lambda_{k}(A\lambda+B)$, where $A, B$ are
$n\times n$ complex hermitian matrices.
\begin{prop}
Let the $n\times n$ selfadjoint pencil $L(\lambda)=A\lambda+B$ such that $w(M^{*}(A\lambda+B)M)\neq\Complex$ for any $n\times(n-k+1)$ isometry
$M$. If $A$ is a positive semidefinite matrix where the algebraic multiplicity  of the eigenvalue $\mu_{A}=0$ is  greater than $k-1$ and $B$ is
positive (or negative) definite, then $\Lambda_{k}(A\lambda+B)=\emptyset$, for any $k=2, 3, \ldots, n$.
\end{prop}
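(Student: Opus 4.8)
The plan is to sidestep the pencil machinery almost completely and reduce the claim to one dimension count followed by a single evaluation of the quadratic form $v^{*}L(\lambda)v$. Suppose, for contradiction, that $\mu\in\Lambda_{k}(A\lambda+B)$ for some $\mu\in\Complex$. By the equivalence (i)$\Leftrightarrow$(v) of Proposition~\ref{pr2} there is a $k$-dimensional subspace $\K\subseteq\Complex^{n}$ with $v^{*}L(\mu)v=0$ for every $v\in\K$, that is,
\[
\mu\,v^{*}Av+v^{*}Bv=0\quad\text{for all }v\in\K.
\]
The only structural fact I would use is that the zero eigenspace of the leading coefficient must meet $\K$ nontrivially: since $A$ is positive semidefinite its algebraic and geometric multiplicities at $0$ coincide, so the hypothesis on that multiplicity is what supplies a lower bound for $\dim\ker A$; what the count actually requires is $\dim\ker A\ge n-k+1$, and then $\dim\ker A+\dim\K\ge(n-k+1)+k>n$, forcing $\ker A\cap\K\neq\{0\}$.

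Next I would pick any nonzero $x\in\ker A\cap\K$ and substitute it. Because $Ax=0$, the $\mu$-dependent term disappears and $0=x^{*}L(\mu)x=\mu\,x^{*}Ax+x^{*}Bx=x^{*}Bx$; but $x\neq 0$ and $B$ is positive definite (respectively negative definite), so $x^{*}Bx>0$ (respectively $x^{*}Bx<0$), a contradiction. Hence $\Lambda_{k}(A\lambda+B)=\emptyset$ for every $k=2,\dots,n$. Note that $\mu$ never entered the decisive step, so this rules out non-real $\lambda$ in the same stroke as real $\lambda$, and no preliminary localization of $w(L(\lambda))$ on a real half-line is required for this route.

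To locate where the auxiliary hypothesis $w(M^{*}(A\lambda+B)M)\neq\Complex$ is used, I would also record the parallel derivation through Proposition~\ref{pr5}: one has $\Lambda_{k}(A\lambda+B)=\bigcap_{M}w\big(M^{*}(A\lambda+B)M\big)$ over $n\times(n-k+1)$ isometries $M$, and since $\dim\ker A\ge n-k+1$ one may choose $M$ whose range lies in $\ker A$, so that $M^{*}(A\lambda+B)M=M^{*}BM$ is a constant pencil with definite coefficient and hence $w\big(M^{*}(A\lambda+B)M\big)=\emptyset$, collapsing the whole intersection; the ``$\neq\Complex$'' assumption then only serves to keep that intersection formula valid. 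The one genuinely delicate point in either version is the size estimate $\dim\ker A\ge n-k+1$: if $A$ is only mildly singular its kernel need not meet every $k$-dimensional subspace and the scheme fails, so the proof really hinges on extracting this bound from the multiplicity hypothesis; the cleanest way to pin down exactly which $k$ the emptiness survives for is the inertia computation for the Hermitian matrix $\mu A+B$, noting that it is positive definite on $\ker A$ and therefore carries at least $\dim\ker A$ positive eigenvalues (so at most $n-\dim\ker A$ negative ones).
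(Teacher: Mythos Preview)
Your instinct that everything rests on the estimate $\dim\ker A\ge n-k+1$ is correct, but the stated hypothesis does not supply it. The assumption ``the algebraic multiplicity of $\mu_A=0$ is greater than $k-1$'' gives only $\dim\ker A\ge k$; the two lower bounds coincide only when $2k\ge n+1$. For $k\le n/2$ your dimension count $\dim\ker A+\dim\K>n$ simply fails, so neither the direct contradiction argument nor the Proposition~\ref{pr5} variant (choosing an $M$ whose range sits inside $\ker A$) goes through. This gap cannot be closed, because the proposition is actually false in that range: take $n=4$, $k=2$, $A=\operatorname{diag}(1,1,0,0)$, $B=I_4$. Then $A\ge0$ with $\dim\ker A=2>k-1$, $B>0$, and $w(M^{*}(A\lambda+B)M)\ne\Complex$ for every $4\times3$ isometry $M$ (since $M^{*}BM>0$). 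Nevertheless $L(-1)=\operatorname{diag}(0,0,1,1)$ and the isometry $Q=[e_1\ e_2]$ gives $Q^{*}L(-1)Q=0_2$, so $-1\in\Lambda_2(A\lambda+B)$; in fact $\Lambda_2(A\lambda+B)=(-\infty,-1]$.

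For comparison, the paper does not argue via a kernel--subspace intersection but uses the closed form $w(M^{*}L(\lambda)M)=(-\infty,-1/\nu_M]$ from \cite{Psar} and then asserts $\bigcap_M(-\infty,-1/\nu_M]=\emptyset$. That conclusion, however, requires $\inf_M\nu_M=0$, which again forces the column space of some $M$ to lie in (or arbitrarily close to) $\ker A$, i.e.\ $\dim\ker A\ge n-k+1$. In the example above one checks $\nu_M\equiv1$, so the paper's intersection equals $(-\infty,-1]$, not $\emptyset$. In short, you located the genuine pressure point of the argument; what is missing is not an extra idea but a stronger hypothesis (multiplicity at least $n-k+1$ rather than $k$), under which both your route and the paper's become valid.
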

\begin{proof}
Suppose $B$ is a positive definite matrix. Due to the condition of the multiplicity of $\mu_{A}=0$, the matrices $M^{*}AM$ and $M^{*}BM$ are  positive
semidefinite and positive definite, respectively, for any $n\times(n-k+1)$ isometry $M$.
Moreover, $w(M^{*}(A\lambda+B)M)\neq\Complex$ and  $w(M^{*}(A\lambda+B)M)=(-\infty,-\frac{1}{\nu_{M}}]$, \cite[Th.9]{Psar},
where $\nu_{M}$ is the maximum eigenvalue of $(M^{*}BM)^{-1}M^{*}AM$.  Then, by Proposition \ref{pr5}, we obtain
$$\Lambda_{k}(A\lambda+B)=\bigcap_{M}{w(M^{*}(A\lambda+B)M)}=\bigcap_{M}(-\infty,-\frac{1}{\nu_{M}}]=\Real^{c}=\emptyset.$$
Similarly, if $B$ is a negative definite matrix.
\end{proof}
Moreover, in the next proposition $\Lambda_{k}(L(\lambda))$ appears to be non empty, with $L(\lambda)$ of special form.
\begin{prop}
Let $L(\lambda)=(\lambda-\lambda_{0})^{m}A_{m}$ be an $n\times n$ matrix polynomial,
where $A_{m}\neq 0$ and $0\notin\Lambda_{k}(A_{m})$. Then $\Lambda_{k}(L(\lambda))$ is a singleton, i.e.
$\Lambda_{k}(L(\lambda))=\set{\lambda_{0}}$, $\lambda_{0}\in\Complex$.
\end{prop}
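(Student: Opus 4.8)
The plan is to use the elementary equivalence $\mu\in\Lambda_{k}(L(\lambda))\Leftrightarrow 0\in\Lambda_{k}(L(\mu))$ (as in the proof of Proposition~\ref{pr2}), observing that here $L(\mu)$ is merely the scalar $(\mu-\lambda_{0})^{m}$ times the \emph{fixed} matrix $A_{m}$, so the defining projection condition in \eqref{rel1} becomes completely transparent.

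First I would check that $\lambda_{0}$ lies in the set. Since $m\geq 1$, the matrix $L(\lambda_{0})=(\lambda_{0}-\lambda_{0})^{m}A_{m}=0_{n}$ vanishes, hence $PL(\lambda_{0})P=0_{n}$ for \emph{every} $P\in\mathcal{P}_{k}$; in particular such a $P$ exists, so by \eqref{rel1} we get $\lambda_{0}\in\Lambda_{k}(L(\lambda))$, with no assumption on $A_{m}$ needed.

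Next I would rule out every other candidate. Let $\mu\in\Lambda_{k}(L(\lambda))$ with $\mu\neq\lambda_{0}$. By \eqref{rel1} there is $P\in\mathcal{P}_{k}$ with $PL(\mu)P=0_{n}$, i.e. $(\mu-\lambda_{0})^{m}\,PA_{m}P=0_{n}$. Because $\mu\neq\lambda_{0}$ the scalar $(\mu-\lambda_{0})^{m}$ is nonzero, and therefore $PA_{m}P=0_{n}=0\cdot P$. In view of \eqref{rel3}, where $\Lambda_{k}(A_{m})$ is identified with $\Lambda_{k}(I\lambda-A_{m})$, this is precisely the statement $0\in\Lambda_{k}(A_{m})$, contradicting the hypothesis $0\notin\Lambda_{k}(A_{m})$. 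Hence $\Lambda_{k}(L(\lambda))\subseteq\set{\lambda_{0}}$, and combined with the previous step, $\Lambda_{k}(L(\lambda))=\set{\lambda_{0}}$.

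There is no genuinely hard step here; the only points that need care are the convention that $\Lambda_{k}(A_{m})$ is the matrix higher rank numerical range in the sense of \eqref{rel3} — so that $0\notin\Lambda_{k}(A_{m})$ says exactly that $PA_{m}P=0_{n}$ admits no solution $P\in\mathcal{P}_{k}$ — and the observation that at $\lambda=\lambda_{0}$ the polynomial vanishes identically as a matrix, which forces $\lambda_{0}$ to belong to the set. Alternatively one could first invoke the translation identity $\Lambda_{k}(L(\lambda+\alpha))=\Lambda_{k}(L(\lambda))-\alpha$ to reduce to the case $\lambda_{0}=0$ and argue with $\lambda^{m}A_{m}$, but the direct computation above is the shortest route.
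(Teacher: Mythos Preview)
Your argument is correct and entirely self-contained: you work directly from the defining projection condition \eqref{rel1} and the identification \eqref{rel3}, and both inclusions are immediate.

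The paper proceeds differently. It invokes the intersection formula of Proposition~\ref{pr5}, $\Lambda_{k}(L(\lambda))=\bigcap_{M}w(M^{*}L(\lambda)M)$, together with Theorem~\ref{pr4}: from $0\notin\Lambda_{k}(A_{m})$ it extracts an $n\times(n-k+1)$ isometry $M_{0}$ with $0\notin F(M_{0}^{*}A_{m}M_{0})$, so that $w(M_{0}^{*}L(\lambda)M_{0})=\set{\lambda_{0}}$, while $\lambda_{0}\in w(M^{*}L(\lambda)M)$ for every $M$; the intersection then collapses to $\set{\lambda_{0}}$. Your route is more elementary and needs none of that machinery, whereas the paper's proof illustrates the intersection description that it has just developed and wishes to showcase. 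Both are short; yours has the advantage of not depending on Theorem~\ref{pr4} or Proposition~\ref{pr5}.
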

\begin{proof}
Since $0\notin\Lambda_{k}(A_{m})$, by Theorem \ref{pr4}, there exists an $n\times(n-k+1)$ isometry $M_{0}$ such that
$0\notin F(M_{0}^{*}A_{m}M_{0})$ and evenly $w(M_{0}^{*}L(\lambda)M_{0})=w((\lambda-\lambda_{0})^{m}M_{0}^{*}A_{m}M_{0})=\set{\lambda_{0}}$.
Due to the special form of $M^{*}L(\lambda)M$, $\lambda_{0}\in w(M^{*}L(\lambda)M)$
for all $n\times(n-k+1)$ isometries $M$,  whereupon by Proposition \ref{pr5}, we have
\[
\Lambda_{k}(L(\lambda))=\bigcap_{M}{w(M^{*}L(\lambda)M)}=w(M_{0}^{*}L(\lambda)M_{0})=\set{\lambda_{0}}.
\]
\end{proof}
In order to obtain $\Lambda_{k}(L(\lambda))\neq\emptyset$ for any matrix polynomial $L(\lambda)=\sum_{l=0}^{m}{A_{l}\lambda^{l}}$ with $A_{m}\neq 0$,
we are led to the common roots of the $k^{2}>1$ scalar polynomials $b_{ij}(\lambda,Q)=q_{i}^{*}L(\lambda)q_{j}$, $i, j=1, \ldots, k$
for some isometries $Q=\begin{bmatrix}q_{1} & \ldots & q_{k} \\\end{bmatrix}\in\M_{n,k}$.
Adapting the notion of the Sylvester matrix $R_{s}$ appeared in \cite{Mar-Dasc} and the discussion therein to the polynomials
\begin{eqnarray}\label{sylv1}
\nonumber b_{ij}(\lambda,Q)& = & q_{i}^{*}A_{m}q_{j}\lambda^{m}+\ldots +q_{i}^{*}A_{l}q_{j}\lambda^{l}+\ldots+q_{i}^{*}A_{0}q_{j}\\
                           & = & b_{ij}^{(m)}(Q)\lambda^{m}+\ldots+b_{ij}^{(l)}(Q)\lambda^{l}+\ldots+b_{ij}^{(0)}(Q)
\end{eqnarray}
for all $i, j=1, \ldots, k$ and for some $n\times k$ isometry $Q=\begin{bmatrix}q_{1} & \ldots & q_{k} \\ \end{bmatrix}$, we have a condition
for the  polyno\-mials $b_{ij}(\lambda,Q)$ to share polynomial common factors.  Denote by $\sigma\leq m$ to be the largest degree of the $k^{2}$
polyno\-mials $b_{ij}(\lambda,Q)$  and let, as in \eqref{sylv1}
\begin{equation}\label{sylv2}
b_{i_{1},j_{1}}(\lambda,Q)=b_{i_{1},j_{1}}^{(\sigma)}(Q)\lambda^{\sigma}+\ldots+
b_{i_{1},j_{1}}^{(l)}(Q)\lambda^{l}+\ldots+b_{i_{1},j_{1}}^{(0)}(Q),
\end{equation}
for some indices $i_{1}, j_{1}\in\set{1, \ldots, k}$. If $\tau\leq\sigma$ is the largest degree of the remaining polynomials, then the generalized Sylvester matrix is
\begin{equation}\label{sylv3}
R_{s}(Q)=\begin{bmatrix}R_{1}(Q) \\ \vdots \\  R_{k^{2}}(Q) \\  \end{bmatrix},
\end{equation}
where $R_{1}(Q)$ is the stripped $\tau\times (\sigma+\tau)$ matrix
\[
R_{1}(Q)=\begin{bmatrix}
b_{i_{1},j_{1}}^{(\sigma)}(Q) & b_{i_{1},j_{1}}^{(\sigma-1)}(Q) &  & \cdots &  b_{i_{1},j_{1}}^{(0)}(Q) &  & \mathbf{0} \\
  & b_{i_{1},j_{1}}^{(\sigma)}(Q) & b_{i_{1},j_{1}}^{(\sigma-1)}(Q) &  &  &   &  \\
  & \ddots &  & \ddots &  & \ddots &  \\
\mathbf{0} &  & b_{i_{1},j_{1}}^{(\sigma)}(Q) & \cdots & b_{i_{1},j_{1}}^{(\sigma-1)}(Q) & \cdots & b_{i_{1},j_{1}}^{(0)}(Q) \\
\end{bmatrix}
\]
and for $p=2,\ldots, k^{2}$, $R_{p}(Q)$  are the following $\sigma\times(\sigma+\tau)$ matrices
\[
R_{p}(Q)=\begin{bmatrix}
\mathbf{0} &  &  & b_{i_{p},j_{p}}^{(\tau)}(Q) & & \cdot & \cdot & b_{i_{p},j_{p}}^{(0)}(Q) \\
           &  &  b_{i_{p},j_{p}}^{(\tau)}(Q)  &  &  &  &  &  \\
           & \cdot &  & \cdot & & \cdot& \cdot &  \\
b_{i_{p},j_{p}}^{(\tau)}(Q) &  & \cdot & \cdot &  &  &  b_{i_{p},j_{p}}^{(0)}(Q) & \mathbf{0}\\
    \end{bmatrix}
\]
with $i_{p}, j_{p}\in\set{1,\ldots, k}$ and $i_{p}\neq i_{1}$, $j_{p}\neq j_{1}$.
Hence, the degree $\delta(Q)\neq0$ of the greatest common divisor  of $b_{ij}(\lambda,Q)$ $(i,j=1, \ldots, k)$ for some $n\times k$ isometry $Q$
satisfies the relation
\begin{equation}\label{sylv4}
rankR_{s}(Q)=\tau+\sigma-\delta(Q)\leq 2m-\delta(Q)
\end{equation}
and clearly, $\Lambda_{k}(L(\lambda))\neq\emptyset$ if and only if there exists an $n\times k$ isometry $Q$
such that $\textrm{rank}R_{s}(Q)<2m$.

Following, we investigate the boundedness of $\Lambda_{k}(L(\lambda))$ and we state the next helpful lemma.

\begin{lem}\label{lem}
Let $A\in\M_{n}(\Complex)$ with $A\neq 0$. For the function $f:\M_{n,k}(\Complex)\to\M_{k}(\Complex)$ defined by $f(Q)=Q^{*}AQ$, we have
$\textrm{int}(ker f)=\emptyset$.
\end{lem}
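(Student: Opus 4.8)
The plan is to show that the kernel of $f(Q)=Q^{*}AQ$, viewed as a subset of the (real) manifold $\M_{n,k}(\Complex)\cong\Real^{2nk}$, has empty interior — equivalently, that no nonempty open set of matrices $Q$ can consist entirely of solutions to $Q^{*}AQ=0_{k}$. The natural approach is by contradiction: suppose $\ker f$ contains an open ball $\mathcal{B}(Q_{0},r)$ around some $Q_{0}\in\M_{n,k}(\Complex)$. I will exploit the fact that $Q^{*}AQ=0$ for all $Q$ in an open neighborhood is a very rigid polynomial identity in the entries of $Q$ and $\bar Q$, and derive that $A$ itself must vanish, contradicting $A\neq 0$.

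First I would set up the algebra. Write $Q=Q_{0}+E$ where $E$ ranges over a small ball around $0$ in $\M_{n,k}(\Complex)$. Then
\[
0 = (Q_{0}+E)^{*}A(Q_{0}+E) = Q_{0}^{*}AQ_{0} + E^{*}AQ_{0} + Q_{0}^{*}AE + E^{*}AE
\]
for all sufficiently small $E$. Since the right-hand side is identically zero on an open set and the terms are homogeneous of degrees $0$, $1$ (in the pair $(E,\bar E)$, but actually degree $1$ overall), and $2$ in $E$, I can separate by scaling: replacing $E$ by $tE$ for real $t$ and comparing coefficients of $1$, $t$, $t^{2}$ gives $Q_{0}^{*}AQ_{0}=0$, $E^{*}AQ_{0}+Q_{0}^{*}AE=0$, and $E^{*}AE=0$, the latter two holding for all $E$ in a neighborhood of $0$, hence (by homogeneity) for all $E\in\M_{n,k}(\Complex)$.

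Now the key step: from $E^{*}AE=0_{k}$ for every $E\in\M_{n,k}(\Complex)$ I must conclude $A=0$. Take $E$ of rank one, $E=xy^{*}$ with $x\in\Complex^{n}$, $y\in\Complex^{k}$ arbitrary; then $E^{*}AE = y(x^{*}Ax)y^{*} = (x^{*}Ax)\,yy^{*}$, and choosing $y\neq 0$ forces $x^{*}Ax=0$ for all $x\in\Complex^{n}$. Over $\Complex$, $x^{*}Ax=0$ for all $x$ implies $A=0$ (polarize: replace $x$ by $x+z$ and by $x+iz$ to recover $z^{*}Ax=0$ for all $x,z$). This contradicts the hypothesis $A\neq 0$, so $\ker f$ can contain no open ball, i.e. $\operatorname{int}(\ker f)=\emptyset$.

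**The main obstacle** is essentially bookkeeping rather than depth: one must be careful that "$E$ small" genuinely yields the homogeneous identities for all $E$ — this is handled cleanly by the $t$-scaling argument above, since a polynomial in $t$ vanishing on an interval vanishes identically — and that the separation into degrees is legitimate even though $E$ and $\bar E$ both appear (they do, but each of the three terms is a fixed bihomogeneous piece, so the $t$-powers still isolate them because $t$ is real and $\overline{tE}=t\bar E$). Once that is in place, the rank-one reduction and the standard polarization over $\Complex$ finish it immediately. I would remark that the same argument shows more: $f$ is a nonzero real-analytic (indeed polynomial) map, and the zero set of such a map on a connected manifold has empty interior unless the map is identically zero; the explicit computation above simply verifies $f\not\equiv 0$ follows from $A\neq 0$.
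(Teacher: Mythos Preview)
Your proof is correct and rests on the same core idea as the paper's: assume an open ball $\mathcal{B}(Q_{0},r)\subset\ker f$, expand $f(Q_{0}+tE)$ as a polynomial in the real parameter $t$, separate homogeneous degrees to obtain $y^{*}Ay=0$ for every vector $y\in\Complex^{n}$, and conclude $A=0$ by polarization. The paper organizes the argument a little differently: it first treats the case $k=1$ by exactly this scaling trick, and then reduces $k>1$ to $k=1$ by varying a single column of $Q_{0}$ inside the ball and reading off the corresponding diagonal entry of $Q^{*}AQ$. Your treatment handles general $k$ in one pass, using the rank-one choice $E=xy^{*}$ to pull out the scalar identity $x^{*}Ax=0$; this is slightly more streamlined and avoids the case split, while the paper's column-by-column reduction is a touch more elementary but longer.
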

\begin{proof}
For $k=1$, let $\textrm{int}(\ker f)\neq\emptyset$ and a vector $x_{0}\in\Complex^{n}\cap \textrm{int}(\ker f)$. Then there exists an  open ball
$\mathcal{B}(x_{0},\varepsilon)\subset\ker f$ with $\varepsilon >0$. For any $y\in\Complex^{n}$ with $y\in\mathcal{B}(0,\varepsilon)$ and real
$t<1$, clearly $ty\in\mathcal{B}(0,t\varepsilon)\subset\mathcal{B}(0,\varepsilon)$ and $x_{0}+ty\in\mathcal{B}(x_{0},\varepsilon)$.
Hence, $$f(x_{0}+y)=f(x_{0}+ty)=0 $$
and consequently we have $(t^{2}-t)y^{*}Ay=0$ for any $y\in\mathcal{B}(0,\varepsilon)$. Therefore, $A=0$, which is a contradiction.

For $k>1$, suppose $Q_{0}\in\M_{n,k}(\Complex)\cap \textrm{int}(\ker f)$ and let the open ball $\mathcal{B}(Q_{0},\varepsilon)\subset\ker f$.
If  an $n\times k$ matrix $Q=\begin{bmatrix}
                        q_{1} & q_{2} & \ldots & q_{k} \\
                      \end{bmatrix}\in\mathcal{B}(Q_{0},\varepsilon)$ and  denote $Q_{0}=\begin{bmatrix}
                        q_{01} & q_{02} & \ldots & q_{0k} \\
                      \end{bmatrix}$, then
\begin{equation}\label{ineq}
\norm{q_{i}-q_{0i}}_{2}=\norm{(Q-Q_{0})e_{i}}_{2}\leq\norm{Q-Q_{0}}_{2}<\varepsilon
\end{equation}
for $i=1, \ldots, k$, where $e_{i}\in\Complex^{n}$ is the $i$-th vector of the standard basis of $\Complex^{n}$ and $\norm\cdot_{2}$ is
the spectral norm. Hence, by
$Q^{*}AQ=Q_{0}^{*}AQ_{0}=0$ we obtain $f(q_{i})=q_{i}^{*}Aq_{i}=0$ and $f(q_{0i})=q_{0i}^{*}Aq_{0i}=0$ $(i=1, \ldots, k)$
and  by \eqref{ineq} we conclude $q_{i}\in\mathcal{B}(q_{0i},\varepsilon)$, i.e.
$\mathcal{B}(q_{0i},\varepsilon)\subset\ker f$. This contradicts the emptyness of  $\textrm{int}(\ker f)$ in the vector case.
\end{proof}
\begin{prop}\label{pr6}
Let $L(\lambda)=A_{m}\lambda^{m}+A_{m-1}\lambda^{m-1}+\ldots +A_{1}\lambda+A_{0}$ be an $n\times n$ matrix polynomial, where $A_{m}\neq 0$.
If $0\notin\Lambda_{k}(A_{m})$, then $\Lambda_{k}(L(\lambda))\neq\emptyset$ is bounded.

Conversely, assume that $rankR_{s}(Q)<2m$, where $R_{s}(Q)$ is the Sylvester matrix  in \eqref{sylv3} of $k^{2}$ scalar polynomials,
elements of matrix $Q^{*}L(\lambda)Q$, for all isometries $Q\in\M_{n,k}$ such that $Q^{*}A_{m}Q=zI_{k}$ $(z\in\Complex\backslash\set{0})$.
If  $\Lambda_{k}(A_{m})\neq\set{0}$ and $\Lambda_{k}(L(\lambda))$ is bounded, then $0\notin\Lambda_{k}(A_{m})$.
\end{prop}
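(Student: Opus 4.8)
The forward direction is the easier half and I would handle it first. Assume $0 \notin \Lambda_k(A_m)$. By Theorem~\ref{pr4} applied to the matrix $A_m$, there is an $n\times(n-k+1)$ isometry $M_0$ with $0 \notin F(M_0^* A_m M_0)$; equivalently the leading coefficient of the $(n-k+1)\times(n-k+1)$ matrix polynomial $M_0^* L(\lambda) M_0$ is invertible, so $w(M_0^* L(\lambda) M_0)$ is a bounded set (a matrix polynomial with nonsingular leading coefficient has bounded numerical range — this is the classical fact that $w$ is contained in a disc determined by the coefficient norms, since for large $|\lambda|$ the term $x^* A_m x \lambda^m$ dominates and cannot vanish). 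Then by Proposition~\ref{pr5}, $\Lambda_k(L(\lambda)) = \bigcap_M w(M^* L(\lambda) M) \subseteq w(M_0^* L(\lambda) M_0)$, which is bounded. Non-emptiness for this case is exactly the second proposition proved just above (or can be re-derived from the Sylvester-rank criterion), so I would simply cite it. The one point needing a line of care is the assertion that $0\notin F(N)$ for $N=M_0^*A_m M_0$ forces $w$ of the associated pencil-like polynomial to be bounded; I would spell out the estimate $|x^*L(\lambda)x| \ge |\lambda|^m\big(|x^*A_mx| - \sum_{j<m}\|A_j\|/|\lambda|^{m-j}\big) > 0$ for $\|x\|=1$ and $|\lambda|$ large, using that $|x^*A_mx|$ is bounded below by $\mathrm{dist}(0,F(N))>0$.

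For the converse I would argue by contraposition: suppose $0 \in \Lambda_k(A_m)$ but also $\Lambda_k(A_m)\neq\{0\}$, and derive that $\Lambda_k(L(\lambda))$ is unbounded (contradicting the hypothesis), \emph{using} the standing assumption $\mathrm{rank}\,R_s(Q)<2m$ for all isometries $Q$ with $Q^*A_m Q = zI_k$, $z\neq 0$. Here is the mechanism. Since $0\in\Lambda_k(A_m)$, Proposition~\ref{pr2} gives an isometry $Q_1\in\M_{n,k}$ with $Q_1^*A_m Q_1 = 0$. Since $\Lambda_k(A_m)\neq\{0\}$, there is also $z\neq 0$ and an isometry $Q_2$ with $Q_2^*A_m Q_2 = zI_k$. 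The higher rank numerical range $\Lambda_k(A_m)$ is convex (Theorem~\ref{pr4} / Toeplitz–Hausdorff), so the whole segment $\{tz : t\in[0,1]\}$ lies in $\Lambda_k(A_m)$, and in fact one can move $Q$ continuously so that $Q^*A_m Q = z(Q)I_k$ ranges over a neighborhood of $0$ in the line $\mathbb{C}z$. The Sylvester-rank hypothesis says: whenever $Q^*A_m Q = zI_k$ with $z\neq 0$, the $k^2$ scalar polynomials $b_{ij}(\lambda,Q) = q_i^* L(\lambda) q_j$ have a nonconstant common factor, hence a common root $\mu(Q)\in\mathbb C$, i.e. $\mu(Q)\in\Lambda_k(L(\lambda))$ by Proposition~\ref{pr2}(iv). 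So it suffices to show these common roots $\mu(Q)$ are unbounded as $z=z(Q)\to 0$.

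The heart of the matter is the last claim, and that is where I expect the real obstacle. The intuition: when $z\to 0$, the leading coefficient matrix $Q^*A_m Q = zI_k$ of the $k\times k$ polynomial $Q^*L(\lambda)Q$ degenerates, so at least one root of every $b_{ij}(\lambda,Q)$ — in particular of their common factor — must escape to infinity (roots depend continuously on coefficients, and a root of $z\lambda^m + b^{(m-1)}\lambda^{m-1}+\cdots$ blows up like $|z|^{-1}$ unless the lower coefficients also collapse). One must rule out the degenerate possibility that all the $b_{ij}$ simultaneously lose degree $m$ \emph{and} their common factor stays a bounded-root polynomial; this is exactly what the rank condition, together with $A_m\neq 0$ and continuity of $Q\mapsto Q^*L(\lambda)Q$, is designed to prevent, but making the escape-to-infinity argument uniform and rigorous is delicate. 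Concretely I would: (1) pick a continuous path $Q_t$, $t\in(0,1]$, of isometries with $Q_t^*A_mQ_t = t z\, I_k$ and $Q_t\to Q_1$ as $t\to 0^+$ (possible since the relevant slice of isometries is a smooth manifold near $Q_1$ by a submersion/implicit-function argument, using $A_m\ne0$ à la Lemma~\ref{lem}); (2) for each $t$ the rank hypothesis yields $\mu_t\in\Lambda_k(L(\lambda))$ with $b_{ij}(\mu_t,Q_t)=0$ for all $i,j$; (3) show $|\mu_t|\to\infty$: if instead $\mu_t$ stayed in a compact set along a subsequence, pass to a limit $\mu_t\to\mu_*$, $Q_t\to Q_1$, to get $q_{1i}^*L(\mu_*)q_{1j}=0$ for all $i,j$ while $Q_1^*A_mQ_1=0$ — and then exploit that the coefficient $A_m\ne 0$ means $Q^*A_mQ\not\equiv 0$ on any neighborhood (Lemma~\ref{lem}) to force, via a normalization of the dominant coefficient, a genuine blow-up, contradicting boundedness of $\{\mu_t\}$. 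Thus $\Lambda_k(L(\lambda))$ is unbounded, contradiction; therefore $0\notin\Lambda_k(A_m)$. I would be candid in the writeup that step (3) is the crux and would present it as the detailed computation, with steps (1)–(2) dispatched quickly by citing Lemma~\ref{lem}, Proposition~\ref{pr2}, and the definition of $R_s(Q)$.
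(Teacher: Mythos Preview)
Your forward direction is exactly the paper's argument: Theorem~\ref{pr4} produces an isometry $M_{0}$ with $0\notin F(M_{0}^{*}A_{m}M_{0})$, hence $w(M_{0}^{*}L(\lambda)M_{0})$ is bounded, and Proposition~\ref{pr5} gives the inclusion $\Lambda_{k}(L(\lambda))\subseteq w(M_{0}^{*}L(\lambda)M_{0})$.

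For the converse, your contrapositive outline matches the paper's: assume $0\in\Lambda_{k}(A_{m})\neq\{0\}$, produce isometries $Q$ with $Q^{*}A_{m}Q=zI_{k}$ and $z\to 0$, invoke the rank hypothesis to get common roots in $\Lambda_{k}(L(\lambda))$, and argue these roots escape to infinity. Two remarks.

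First, your step~(1) is harder than necessary. The paper does not build a continuous path of isometries; it just takes any sequence $z_{\nu}\to 0$ in $\Lambda_{k}(A_{m})$ (which exists by convexity), picks corresponding isometries $Q_{\nu}$, and uses compactness of the set of $n\times k$ isometries to extract a convergent subsequence $Q_{\rho}\to Q_{0}$ with $Q_{0}^{*}A_{m}Q_{0}=0$. Lemma~\ref{lem} is then only used to guarantee $z_{\rho}\neq 0$ along the subsequence.

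Second, and more importantly, your step~(3) has a genuine gap. Passing to the limit gives $Q_{0}^{*}L(\mu_{*})Q_{0}=0_{k}$ with $Q_{0}^{*}A_{m}Q_{0}=0_{k}$; but that is \emph{not} a contradiction --- it merely says $\mu_{*}\in\Lambda_{k}(L(\lambda))$. Invoking Lemma~\ref{lem} (``$Q^{*}A_{m}Q\not\equiv 0$ on any neighbourhood'') does not by itself force any blow-up of the \emph{common} roots you are tracking; a priori the common factor of the $b_{ij}(\lambda,Q_{t})$ could keep bounded roots while only the non-common roots of the individual $b_{ij}$ escape. The paper supplies the missing mechanism: since $Q_{0}^{*}A_{m}Q_{0}=0$, there must exist some index $j\neq m$ with $Q_{0}^{*}A_{j}Q_{0}\neq 0_{k}$ (otherwise Proposition~1(d) would give $\Lambda_{k}(L(\lambda))=\Complex$, already unbounded), hence $\|Q_{\rho}^{*}A_{j}Q_{\rho}\|\geq\varepsilon>0$ for large $\rho$. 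Then the $(m-j)$th elementary symmetric function of the roots of the monic $k\times k$ matrix polynomial $z_{\rho}^{-1}Q_{\rho}^{*}L(\lambda)Q_{\rho}$ is $\pm z_{\rho}^{-1}Q_{\rho}^{*}A_{j}Q_{\rho}$ (Vieta-type relations for matrix polynomials, \cite{Dennis}), which is unbounded as $z_{\rho}\to 0$. This is what forces the root set --- and hence $\Lambda_{k}(L(\lambda))$ --- to be unbounded. Your write-up should replace the vague ``normalization of the dominant coefficient'' sentence with this coefficient-ratio argument.
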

\begin{proof}
Initially, we should remark that we investigate the boundedness of $\Lambda_{k}(L(\lambda))$ taking into account the condition \eqref{sylv4}, so that
it is not empty and all the sets $\Lambda_{1}(L(\lambda))\supseteq\ldots\supseteq\Lambda_{k-1}(L(\lambda))$  are not bounded.
If $0\notin\Lambda_{k}(A_{m})$, then by Theorem \ref{pr4} there exists an $n\times(n-k+1)$ isometry $M_{0}$ such that $0\notin F(M_{0}^{*}A_{m}M_{0})$.
Hence, $w(M_{0}^{*}L(\lambda)M_{0})$ is bounded \cite{Li-Rodman} and by Proposition \ref{pr5}, as
$\Lambda_{k}(L(\lambda))\subseteq w(M_{0}^{*}L(\lambda)M_{0})$, we conclude that $\Lambda_{k}(L(\lambda))$ is bounded.

For the converse, suppose that $0\in\Lambda_{k}(A_{m})\neq\set{0}$ and $\Lambda_{k}(L(\lambda))$ is bounded.
We may find a sequence $\set{z_{\nu}}\subseteq\Lambda_{k}(A_{m})$ such that $\lim_{\nu\to\infty}z_{\nu}=0$ and consequently,
a sequence of $n\times k$ isometries $\set{Q_{\nu}}$ such that $Q_{\nu}^{*}A_{m}Q_{\nu}=z_{\nu}I_{k}\to 0_{k}$.
Due to the compactness of the group of $n\times k$ isometries, there is a subsequence  $\set{Q_{\rho}}$ of
$\set{Q_{\nu}}$ such that $\lim_{\rho\to\infty}Q_{\rho}=Q_{0}$,
with $Q_{0}\in\M_{n,k}$ be an  isometry. Hence, by continuity, $\lim_{\rho\to\infty}Q_{\rho}^{*}A_{m}Q_{\rho}=Q_{0}^{*}A_{m}Q_{0}=0_{k}$ and by Lemma
\ref{lem}, should be  $Q_{\rho}^{*}A_{m}Q_{\rho}=z_{\rho}I_{k}\neq 0$. Note that in \eqref{sylv3}, the Sylvester matrix $R_{s}(Q_{\rho})$
has dimensions $k^{2}m\times 2m$, since in \eqref{sylv2}, $\sigma=\tau=m$ and due to $\textrm{rank}R_{s}(Q_{\rho})<2m$, the equation $Q_{\rho}^{*}L(\lambda)Q_{\rho}=0_{k}$
always guarantees roots.

Moreover, there exists an index $j\neq m$ such that $Q_{0}^{*}A_{j}Q_{0}\neq 0_{k}$ (otherwise $\Lambda_{k}(L(\lambda))\equiv\Complex$)
and evenly, $\norm{Q_{\rho}^{*}A_{j}Q_{\rho}}\geq\varepsilon$ for some fixed $\varepsilon>0$ and  sufficiently large $\rho$.
Hence, the $(m-j)$th elementary symmetric function $\pm\frac{1}{z_{\rho}}Q_{\rho}^{*}A_{j}Q_{\rho}$
of the roots of the matrix polynomial $Q_{\rho}^{*}L(\lambda)Q_{\rho}$ \cite[Th.4.2]{Dennis}
is not bounded, concluding that  $\Lambda_{k}(L(\lambda))$ is not bounded. This contradicts the assumption and the proof is complete.
\end{proof}
Obviously, if $L(\lambda)$ is a monic matrix polynomial, then $\Lambda_{k}(L(\lambda))$ is always bounded.
Following, we present an illustrative example of Proposition \ref{pr6}.\\
\textbf{Example 2.}\\
\textbf{\small{I}.} Let the matrix polynomial
\[
L(\lambda)=\begin{bmatrix}
  1 & 0 & 0 & 0 \\
  0 & i & 0 & 0 \\
  2 & i & 0 & 2 \\
  -i & 0 & -2 & 8 \\
\end{bmatrix}
\lambda^{2}+\begin{bmatrix}
                i & 2 & i & 3 \\
                3 & 0 & 0 & 0 \\
                0 & 4 & 5 & 0 \\
                i & 0 & i & 0 \\
              \end{bmatrix}\lambda+\begin{bmatrix}
                                     1 & 2 & 3 & 4 \\
                                     2 & 3 & 4 & 5 \\
                                     3 & 4 & 5 & 6 \\
                                     5 & 6 & 7 & 8 \\
                                   \end{bmatrix}.
\]
\begin{center}
  \begin{tabular}{cc}
    \includegraphics[width=0.35\textwidth]{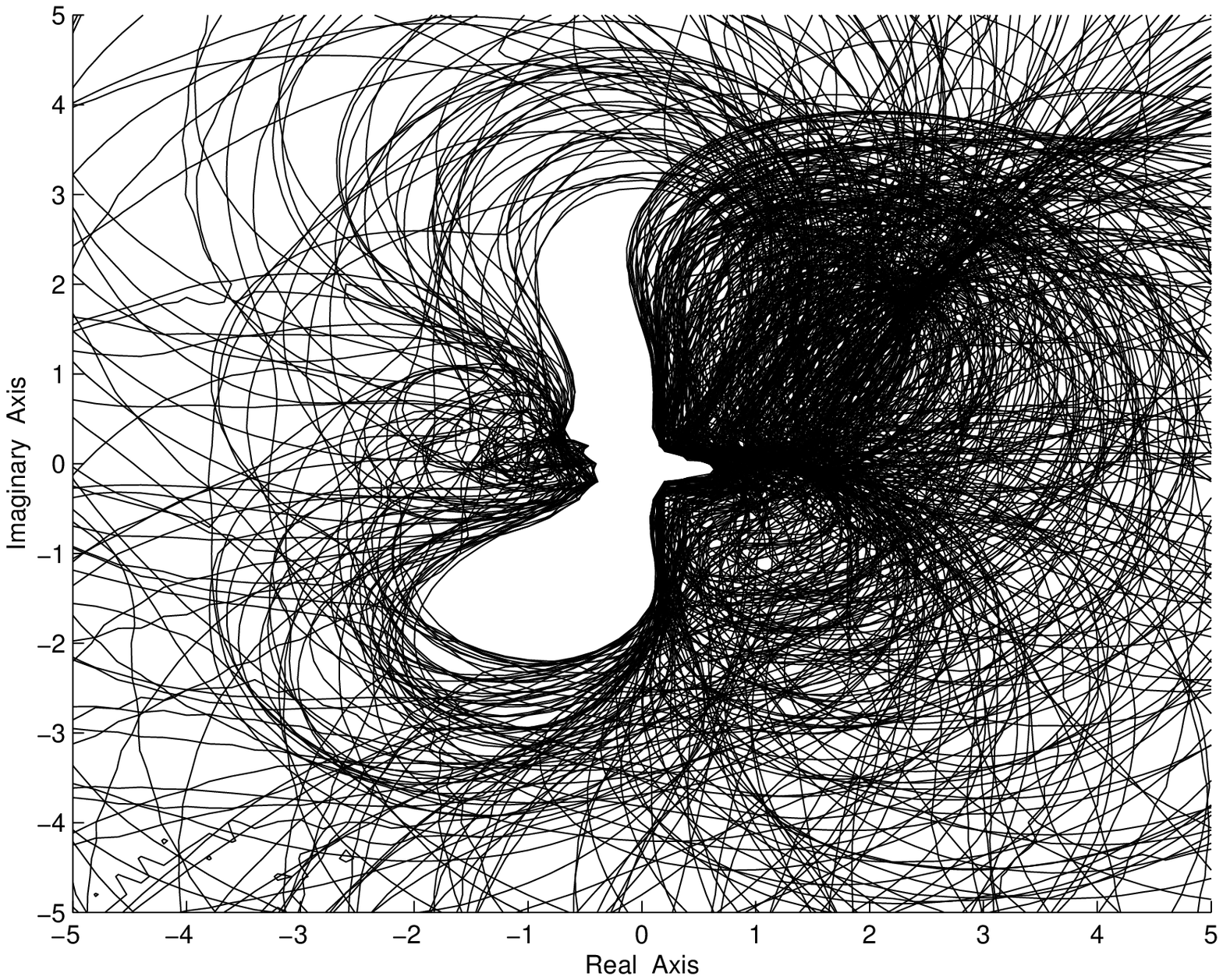}                                       
    \includegraphics[width=0.35\textwidth]{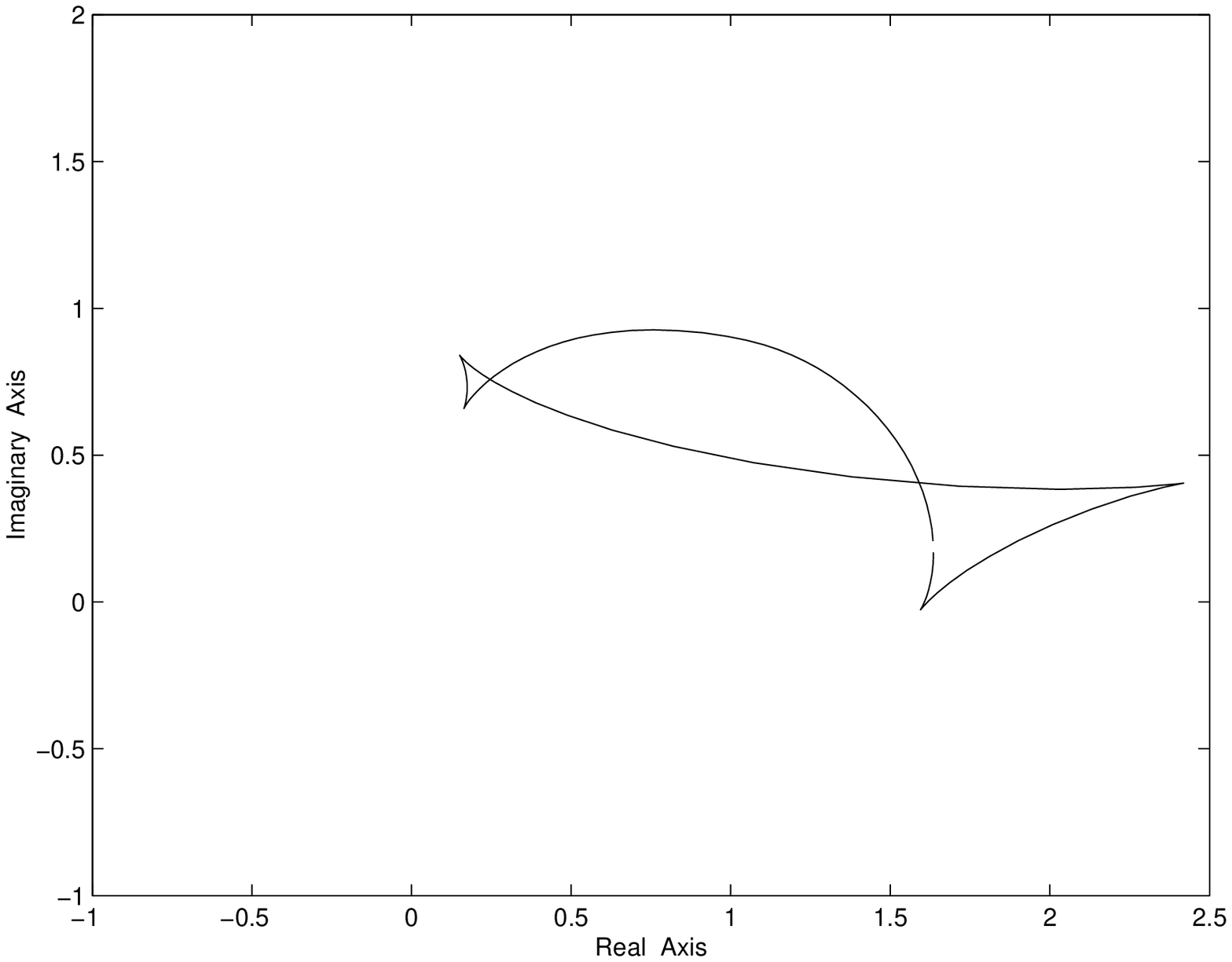}                                       
  \end{tabular}
\end{center}
The uncovered area in the left picture approximates the set $\Lambda_{2}(L(\lambda))$, which is bounded, although
$\Lambda_{1}(L(\lambda))= \Complex$. The boundary of $\Lambda_{2}(A_{2})$ of the leading  coefficient $A_{2}$ is illu\-strated on the right
and we observe that $0\notin\Lambda_{2}(A_{2})$.\\
\textbf{\small{II}.} For the converse, let the $4\times 4$ matrix polynomial $(m=1)$
\[
L(\lambda)=\begin{bmatrix}
         3 & 0 & 0 & 0 \\
         0 & 0 & 0 & 0 \\
         0 & 0 & 0 & 0 \\
         0 & 0 & 0 & 4 \\
       \end{bmatrix}\lambda+\begin{bmatrix}
         0 & 0 & 0 & 0 \\
         0 & 2 & 0 & 0 \\
         0 & 0 & -1 & 0 \\
         0 & 0 & 0 & 0 \\
       \end{bmatrix}=A_{1}\lambda+A_{0}.
\]
Firstly, we observe that $0\in\Lambda_{2}(A_{1})=[0,3]$. On the other hand, $\Lambda_{2}(L(\lambda))$ is equal to the bounded set $\set{0}$.
In fact, if we take the $4\times 3$ isometries
$M_{1}=\left[\begin{smallmatrix}
         1 & 0 & 0 \\
         0 & 1 & 0 \\
         0 & 0 & 0 \\
         0 & 0 & 1 \\
       \end{smallmatrix}\right]$ and $M_{2}=\left[\begin{smallmatrix}
         1 & 0 & 0 \\
         0 & 0 & 0 \\
         0 & 1 & 0 \\
         0 & 0 & 1 \\
       \end{smallmatrix}\right]$, then $M_{1}^{*}A_{1}M_{1}$ and $M_{1}^{*}A_{0}M_{1}$ are both posi\-tive semidefi\-nite matrices and consequently,
\cite[Th.9]{Psar}, $w(M_{1}^{*}L(\lambda)M_{1})=(-\infty,0]$. Similarly, $M_{2}^{*}A_{1}M_{2}$, $M_{2}^{*}A_{0}M_{2}$ are positive and negative
semi\-definite, respectively,  which\, verifies \, $w(M_{2}^{*}L(\lambda)M_{2})=[0,\infty)$. Clearly,
$\Lambda_{2}(L(\lambda))\subseteq w(M_{1}^{*}L(\lambda)M_{1})\cap w(M_{2}^{*}L(\lambda)M_{2})=\set{0}$ and  $0\in\Lambda_{2}(L(\lambda))\neq\emptyset$,
i.e. $\Lambda_{2}(L(\lambda))=\set{0}$.\\
In addition,  for the isometry $Q=\left[\begin{smallmatrix}
         0 &  1/\sqrt{3} \\
         -\sqrt{6}/4 & 1/\sqrt{3}   \\
         \sqrt{6}/4  & 1/\sqrt{3}  \\
         1/2 & 0  \\
       \end{smallmatrix}\right]$ we have $Q^{*}A_{1}Q=I_{2}$ and in \eqref{sylv3} the Sylvester matrix   $R_{s}(Q)=\left[\begin{smallmatrix}
         1 & 3/8  \\
         1 &  1/3 \\
         0 & -3\sqrt{2}/4  \\
         0 & -3\sqrt{2}/4  \\
       \end{smallmatrix}\right]$ has $\textrm{rank}R_{s}(Q)=2$, not less than 2, as it is required.\\
\textbf{\small{III}.} Consider the $4\times 4$ matrix polynomial $L(\lambda)=I_{2}\otimes(B\lambda+I_{2})$, with
$B=\left[\begin{smallmatrix}
         1 & 1 \\
         0 & 0 \\
       \end{smallmatrix}\right]$. Then  $\Lambda_{2}(I_{2}\otimes B)\neq\set{0}$ and additionally, $0\in\Lambda_{2}(I_{2}\otimes B)$.
In this case, for any $4\times 2$ isometry $Q$ such that
$Q^{*}(I_{2}\otimes B)Q=zI_{2}\neq0_{2}$, the Sylvester matrix in \eqref{sylv3}
is $R_{s}(Q)=\left[\begin{smallmatrix}1 & 1/z\\1 & 1/z \\ 0 & 0\\0 & 0\\
\end{smallmatrix}\right]$ with $\textrm{rank}R_{s}(Q)=1<2$. Since, $0\in F(A_{2})$, then $w(L(\lambda))$ as well as
$\Lambda_{2}(L(\lambda)\oplus L(\lambda))$ (Corollary \ref{cor4}) are unbounded. It was
expected by the converse of Proposition \ref{pr6}.\\

Further, we study the connectedness of $\Lambda_{k}(L(\lambda))$, attempting to specify a bound for the number of its connected components.
\begin{prop}\label{con}
Let $L(\lambda)=A_{m}\lambda^{m}+\ldots +A_{1}\lambda+A_{0}$ be an $n\times n$ matrix polynomial, with $A_{m}\neq 0$
and let $\Lambda_{k}(L(\lambda))\neq\emptyset$ have $\rho$ connected components. Moreover,  $rankR_{s}(Q)<2m$,
where $R_{s}(Q)$ is the Sylvester matrix in \eqref{sylv3} of $k^{2}$ polynomials (elements of $Q^{*}L(\lambda)Q$), for any $n\times k$
isometry $Q$ such that $Q^{*}A_{m}Q=\gamma I_{k}$ with $\gamma\in\Lambda_{k}(A_{m})\setminus\set{0}$.

If $\Lambda_{k}(A_{m})\setminus\set{0}$ is connected, then $\rho\leq l\leq m$, where
$l$ is the minimum number of distinct roots of the equation $Q^{*}L(\lambda)Q=0$ for any $n\times k$ isometry $Q$
such that $Q^{*}A_{m}Q=\gamma I_{k}$, with $\gamma\in\Lambda_{k}(A_{m})\setminus\set{0}$.

Otherwise, if $\Lambda_{k}(A_{m})\setminus\set{0}=\mathcal{C}_{1}\cup\mathcal{C}_{2}$, $\mathcal{C}_{1}\cap\mathcal{C}_{2}=\emptyset$
and $\mathcal{C}_{i}$, $i=1,2$ are connected, then $\rho\leq l_{1}+l_{2}\leq 2m$, where  $l_{i}$ is the minimum number of distinct roots
of $Q^{*}L(\lambda)Q=0$ for any $n\times k$ isometry $Q$  that corresponds to points $\gamma\in\mathcal{C}_{i}$,
for $i=1, 2$.
\end{prop}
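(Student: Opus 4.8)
The plan is to realise $\Lambda_{k}(L(\lambda))$ as a union of the (finite) root–sets of the $k\times k$ compressions $Q^{*}L(\lambda)Q$ taken over the isometries $Q$ for which $Q^{*}A_{m}Q$ is a non‑zero scalar matrix, and then to bound the number of connected components of that union by a continuity/monodromy argument over the parameter set of such $Q$'s. Concretely, by Proposition~\ref{pr5}, $\Lambda_{k}(L(\lambda))=\bigcup_{N}\Lambda_{k}(N^{*}L(\lambda)N)$ over $n\times k$ isometries $N$, and since $N^{*}L(\lambda)N$ is $k\times k$ we have $\Lambda_{k}(N^{*}L(\lambda)N)=R(N):=\set{\mu\in\Complex:N^{*}L(\mu)N=0_{k}}$, the common root set of the entries $b_{ij}(\lambda,N)$ as in \eqref{sylv1}. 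Let $\mathcal{Q}$ be the set of $n\times k$ isometries $Q$ with $Q^{*}A_{m}Q=\gamma I_{k}$ for some $\gamma=\gamma(Q)\in\Lambda_{k}(A_{m})\setminus\set{0}$. For $Q\in\mathcal{Q}$ the leading coefficient of $Q^{*}L(\lambda)Q$ is $\gamma I_{k}$ with $\gamma\neq0$, so in \eqref{sylv2} one has $\sigma=\tau=m$, the block $R_{1}(Q)$ has full row rank $m$, and the standing hypothesis $\mathrm{rank}\,R_{s}(Q)<2m$ gives $1\le\delta(Q)=2m-\mathrm{rank}\,R_{s}(Q)\le m$; hence the g.c.d.\ $d_{Q}(\lambda)$ of the $b_{ij}(\lambda,Q)$ has degree in $[1,m]$, $R(Q)=\set{\text{roots of }d_{Q}}$ is non‑empty and finite with $1\le|R(Q)|\le m$, so $1\le l\le m$ and $1\le l_{i}\le m$. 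The structural claim I would prove next is that $\Lambda_{k}(L(\lambda))=\bigcup_{Q\in\mathcal{Q}}R(Q)$: the inclusion ``$\supseteq$'' is immediate, and ``$\subseteq$'' is argued as in Proposition~\ref{pr6}, using that under the rank condition every such compression already vanishes on a curve of $\lambda$'s; if only $\bigcup_{Q\in\mathcal{Q}}R(Q)\subseteq\Lambda_{k}(L(\lambda))\subseteq\overline{\bigcup_{Q\in\mathcal{Q}}R(Q)}$ is available, the count below is unaffected, since $A\subseteq B\subseteq\overline{A}$ forces $B$ to have at most as many connected components as $A$.

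Next I would check connectedness of the parameter set. The map $\gamma:\mathcal{Q}\to\Lambda_{k}(A_{m})\setminus\set{0}$ is continuous and onto, and each fibre $\mathcal{Q}_{\gamma}=\set{Q:Q^{*}A_{m}Q=\gamma I_{k}}$ is connected; hence $\mathcal{Q}$ is connected whenever $\Lambda_{k}(A_{m})\setminus\set{0}$ is, and in the split case $\mathcal{Q}=\mathcal{Q}^{(1)}\cup\mathcal{Q}^{(2)}$ with $\mathcal{Q}^{(i)}=\gamma^{-1}(\mathcal{C}_{i})$ disjoint and each connected. Put $Z=\set{(Q,\mu):Q\in\mathcal{Q},\ \mu\in R(Q)}$, with projections $p:Z\to\mathcal{Q}$ (onto, finite fibres $R(Q)$ of size $\le m$) and $\pi:Z\to\Complex$, so $\pi(Z)=\bigcup_{Q\in\mathcal{Q}}R(Q)$, and fix $Q_{0}\in\mathcal{Q}$ with $|R(Q_{0})|=l$.

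The counting then reduces to one assertion: every connected component $\Pi$ of $\pi(Z)$ meets $R(Q_{0})$. Granting this, the components of $\pi(Z)$ inject into the $l$‑point set $R(Q_{0})$, so $\pi(Z)$ — hence $\Lambda_{k}(L(\lambda))$, by Step~1 — has at most $l\le m$ components, i.e.\ $\rho\le l\le m$; in the split case one applies the same to each $\mathcal{Q}^{(i)}$, writing $\Lambda_{k}(L(\lambda))=\pi\!\left(p^{-1}(\mathcal{Q}^{(1)})\right)\cup\pi\!\left(p^{-1}(\mathcal{Q}^{(2)})\right)$ with each piece having at most $l_{i}$ components, whence $\rho\le l_{1}+l_{2}\le2m$. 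To prove the assertion, pick $\mu^{*}\in\Pi$, choose $Q^{*}\in\mathcal{Q}$ with $\mu^{*}\in R(Q^{*})$, join $Q^{*}$ to $Q_{0}$ by a path $t\mapsto Q(t)$ in the path‑connected $\mathcal{Q}$, and continue $\mu^{*}$ along it: the set of $t$ admitting a continuous $\mu(s)\in R(Q(s))$ on $[0,t]$ with $\mu(0)=\mu^{*}$ is non‑empty and closed (along the compact path $\gamma(Q(t))$ stays off $0$, so the roots of $d_{Q(t)}$ stay bounded, and $Q^{*}L(\mu)Q$ is jointly continuous), and one wants it open, hence equal to $[0,1]$; then $\mu([0,1])\subseteq\pi(Z)$ is connected and contains $\mu^{*}\in\Pi$, so lies in $\Pi$, and $\mu(1)\in R(Q_{0})\cap\Pi$.

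The two places where the real work lies are: (a) the connectedness of the fibres $\mathcal{Q}_{\gamma}$ — equivalently, of the set of orthonormal $k$‑frames spanning a subspace on which $A_{m}$ acts as $\gamma I$; and (b) the openness in the continuation step, i.e.\ that the followed common root of the $k^{2}$ (over‑determined) polynomials $b_{ij}(\lambda,Q(t))$ cannot disappear before reaching $Q_{0}$. For (b) one must genuinely use the hypothesis $\mathrm{rank}\,R_{s}(Q(t))<2m$ — which keeps $\delta(Q(t))\ge1$ throughout, so $R(Q(t))\neq\emptyset$ at every instant — together with, if necessary, the semialgebraicity of $Z$ and of $p$; I expect this continuation/monodromy step to be the crux of the proof, with the structural identity $\Lambda_{k}(L(\lambda))=\bigcup_{Q\in\mathcal{Q}}R(Q)$ (or its closure form) the other nontrivial ingredient.
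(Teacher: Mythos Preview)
Your strategy is essentially the paper's: parametrise $\Lambda_{k}(L(\lambda))$ by the root--sets $R(Q)$ over the connected family $\mathcal{Q}$ of isometries with $Q^{*}A_{m}Q=\gamma I_{k}$, $\gamma\neq 0$, and then continue roots along paths in $\mathcal{Q}$ back to the minimising $Q_{0}$. The packaging differs: you set this up abstractly via the incidence set $Z$ and a base--fibre decomposition of $\mathcal{Q}\to\Lambda_{k}(A_{m})\setminus\{0\}$, whereas the paper bypasses any fibre argument by producing, for \emph{any} pair $Q_{0},Q_{1}\in\mathcal{Q}$, an explicit path
\[
Q(t)=\bigl(\sqrt{1-t^{2}}\,Q_{0}+t\,Q_{1}U_{0}\bigr)C(t,U_{0}),\qquad t\in[0,1],
\]
with a diagonal unitary $U_{0}$ chosen (via the alternatives in Li--Rodman, Theorem~2.2) so that $Q^{*}(t)A_{m}Q(t)$ is a nonzero scalar multiple of $I_{k}$ throughout; the orthogonality needed for $Q^{*}(t)Q(t)=I_{k}$ comes from aligning the columns of $Q_{0},Q_{1}$ along principal vectors of the two column spaces. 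This single construction is exactly what fills your acknowledged gap~(a), and it does so without ever asserting that an individual fibre $\mathcal{Q}_{\gamma}$ is connected --- a claim you state but which is not obvious and which the paper neither needs nor proves.

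On your gap~(b), the paper is no more rigorous than you are: after building $Q(t)$ it simply appeals to ``continuity of $Q(t)$'' to conclude that the roots $\lambda_{j}(t)$ of $Q^{*}(t)L(\lambda)Q(t)=0_{k}$ trace continuous curves, with the Sylvester rank hypothesis invoked only to ensure $R(Q(t))\neq\emptyset$ at each $t$. Your openness/monodromy worry about a followed \emph{common} root vanishing is legitimate but is not addressed in the paper either. Finally, the structural identity $\Lambda_{k}(L(\lambda))=\bigcup_{Q\in\mathcal{Q}}R(Q)$ that you single out as a nontrivial ingredient is not discussed at all in the paper's proof; the paper implicitly works only with roots coming from $Q\in\mathcal{Q}$ and concludes once those are shown to be path--connected in the stated number of batches.
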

\begin{proof}
Let $\mathcal{C}_{1}$ be a connected component of $\Lambda_{k}(A_{m})\setminus\set{0}$ and
the $n\times k$ isometries $Q_{0}=\begin{bmatrix}q_{01} & \ldots & q_{0k} \\\end{bmatrix}$,
$Q_{1}=\begin{bmatrix}q_{11} & \ldots & q_{1k} \\ \end{bmatrix}$  correspond to
$Q_{0}^{*}A_{m}Q_{0}=\gamma_{0}I_{k}$ and $Q_{1}^{*}A_{m}Q_{1}=\gamma_{1}I_{k}$,
with $\gamma_{0}, \gamma_{1}\in\mathcal{C}_{1}$. Evenly, we consider that $Q_{0}^{*}L(\lambda)Q_{0}=0_{k}$, $Q_{1}^{*}L(\lambda)Q_{1}=0_{k}$
and in particular, $Q_{0}$ has the pro\-perty that provides the minimum number of distinct roots.
We shall prove that there exists a continuous function of isometries $Q(t):[0,1]\to\M_{n,k}(\Complex)$, with $Q(0)=Q_{0}$, $Q(1)=Q_{1}U$ for some
unitary matrix $U$ such that corresponds to a continuous path $\gamma(t)\in\mathcal{C}_{1}$ joining $\gamma_{0}$ to $\gamma_{1}$.

In case $\gamma_{0}\neq\gamma_{1}$ and the line segment joining $\gamma_{0}$, $\gamma_{1}$ does not contain the origin,
consider the continuous function
\begin{equation}\label{q1}
Q(t)=(\sqrt{1-t^{2}}Q_{0}+tQ_{1}U)C(t,U),\quad t\in[0,1],
\end{equation}
where $U=diag(e^{i\theta_{1}}, \ldots, e^{i\theta_{k}})$, with $\theta_{j}\in[0,2\pi]$, $j=1, \ldots, k$ and
\[
C(t,U)=diag(c_{1}^{-1}(t,\theta_{1}), \ldots, c_{k}^{-1}(t,\theta_{k}))\in\M_{k},
\]
where $c_{j}(t,\theta_{j})=\|\sqrt{1-t^{2}}q_{0j}+te^{i\theta_{j}}q_{1j}\|_{2}$, $j=1, \ldots, k$. Clearly,  $Q(0)=Q_{0}$, $Q(1)=Q_{1}U$ and
$Q^{*}(t)Q(t)=I_{k}$, since the subspaces $\mathcal{K}_{j}=span\set{q_{0j},q_{1j}}$ are pairwise orthogonal for all $j=1, \ldots, k$
\cite[p.318]{Halmos}. Hence, after some manipulations we obtain
\[
Q^{*}(t)A_{m}Q(t)=C(t,U)\left[\gamma(t)I_{k}+t\sqrt{1-t^{2}}(Q_{0}^{*}A_{m}Q_{1}U+U^{*}Q_{1}^{*}A_{m}Q_{0})\right]C(t,U),
\]
where $\gamma(t)=\gamma_{0}+t^{2}(\gamma_{1}-\gamma_{0})$ for $t\in[0,1]$. Moreover, according to the conditions (i)-(iii) in the proof of Theorem 2.2
in \cite{Li-Rodman}, we may have a suitable unitary matrix $U_{0}=diag(e^{i\theta_{01}}, \ldots, e^{i\theta_{0k}})$ such that the matrix function
\[
g(U)=Q_{0}^{*}A_{m}Q_{1}U+U^{*}Q_{1}^{*}A_{m}Q_{0}
\]
satisfies one of the following conditions:
\begin{itemize}
  \item [(i)] $g(U_{0})=0_{k}$,
  \item [(ii)] $g(U_{0})=\xi(\gamma_{1}-\gamma_{0})I_{k}$ for some real $\xi\neq 0$.
\end{itemize}
Then,  $Q^{*}(t)A_{m}Q(t)=\left[\gamma_{0}+(t^{2}+\xi t\sqrt{1-t^{2}})(\gamma_{1}-\gamma_{0})\right]C^{2}(t,U_{0})\neq 0_{k}$ and
for all $j=1, \ldots, k$ the line segments
$h_{j}(t)=\frac{\gamma_{0}+(t^{2}+\xi t\sqrt{1-t^{2}})(\gamma_{1}-\gamma_{0})}{c_{j}^{2}(t,\theta_{0j})}\neq 0$
join the points $\gamma_{0}, \gamma_{1}$  without these necessarily  be endpoints.
Apparently, due to the convexity of $\Lambda_{k}(A_{m})$, we have that the isometries $Q(t)$ generate the line segment $\gamma(t)\in\mathcal{C}_{1}$.

In case the origin belongs to the line segment $[\gamma_{0},\gamma_{1}]$, $(\gamma_{0}\neq\gamma_{1})$,
we may  consider another $\gamma_{2}\in\mathcal{C}_{1}$
such that $\gamma_{2}\neq\gamma_{0}, \gamma_{1}$ and $[\gamma_{0}, \gamma_{2}]\cup[\gamma_{2}, \gamma_{1}]\subseteq\mathcal{C}_{1}$. This is
true because of the convexity of $\Lambda_{k}(A_{m})$ and the fact that the points $\gamma_{0}, \gamma_{1}$ belong to the same connected component.

Finally, if $\gamma_{0}=\gamma_{1}$ and $A_{m}$ is a scalar matrix, then instead of \eqref{q1} consider the conti\-nuous function of $n\times k$
isometries
\[
Q(t)=(\sqrt{1-t^{2}}Q_{0}+tQ_{1})C(t,I_{k}),\quad t\in[0,1].
\]
Otherwise, if $A_{m}$ is not scalar, we refer to  \eqref{q1}.

Thus, we have constructed a continuous function of $n\times k$ isometries $Q(t)$ such that $Q^{*}(t)A_{m}Q(t)=\gamma(t)I_{k}\neq0_{k}$, $t\in[0,1]$
and this asserts that the Sylvester matrix $R_{s}(Q(t))\in\M_{k^{2}m,2m}$ for $t\in[0,1]$, since
$\sigma=\tau=m$ in \eqref{sylv2}. Hence, by the assumption $\textrm{rank}R_{s}(Q(t))<2m$ for all $t\in[0,1]$, we have  that the equation
$Q^{*}(t)L(\lambda(t))Q(t)=0$ has  roots, let $\lambda_{1}(t), \ldots, \lambda_{r}(t)$ $(r\leq m)$.
Due to  the continuity of $Q(t)$, the roots $\lambda_{j}(t):[0,1]\rightarrow\Lambda_{k}(L(\lambda))$ are continuous paths in $\Lambda_{k}(L(\lambda))$,
connecting the\, roots \,of  equations $Q_{0}^{*}L(\lambda)Q_{0}=0_{k}$ and $Q_{1}^{*}L(\lambda)Q_{1}=0_{k}$ and thus the proof is completed.
\end{proof}
\textbf{Example 3.}\\
Let the $4\times 4$ quadratic matrix polynomial
\[
L(\lambda)=\left[\begin{smallmatrix}
             2i & 0 \\
             0 & -2i \\
           \end{smallmatrix}\right]\otimes I_{2}\lambda^{2}+4I_{4}\lambda=\lambda(D\lambda+4I_{4}), \,\,with\,\,\,
D=\left[\begin{smallmatrix}
             2i & 0 \\
             0 & -2i \\
           \end{smallmatrix}\right]\otimes I_{2}.
\]
Obviously, $\Lambda_{2}(L(\lambda))=\set{0}\cup\Lambda_{2}(D\lambda+4I_{4})$ and $0\notin\Lambda_{2}(D\lambda+4I_{4})\neq\emptyset$, that is
$\set{0}$ is an isolated point. We also note that
$\mu_{0}\in\Lambda_{2}(D\lambda+4I_{4})$ if and only if
$\mu_{0}^{-1}\in\Lambda_{2}(\left[\begin{smallmatrix}-i/2 & 0\\0 & i/2\\\end{smallmatrix}\right]\otimes I_{2})\setminus\set{0}$,
therefore $\Lambda_{2}(L(\lambda))$ has three connected components, two on the imaginary axis, the sets $(-\infty,-2]$, $[2,\infty)$ and $\set{0}$.
Moreover, for the $8\times 4$ Sylvester matrix $R_{s}(Q)$ in \eqref{sylv3} we have
$\textrm{rank}R_{s}(Q)=\textrm{rank}\left[\begin{smallmatrix} \lambda_{0} & 0 & 4 & 0\\ 0 & \lambda_{0} & 0 & 4\\
\lambda_{0} & 0 & 4 & 0\\ 0 & \lambda_{0} & 0 & 4\\  \end{smallmatrix}\right]<4$ for all isometries $Q\in\M_{4,2}$ such that
$Q^{*}DQ=\lambda_{0}I_{2}\neq 0_{2}$. Also $\Lambda_{2}(D)\setminus\set{0}$  has
two connected components and Proposition \ref{con} is confirmed.

\section{Sharp points}
In this section, following \cite{Mar-Psar1}, we define  the notion of sharp points. Parti\-cularly, $z_{0}\in\partial\Lambda_{k}(L(\lambda))$
is called to be a \emph{sharp point} if for a connected component
$\Lambda_{k}^{(s)}(L(\lambda))$ of $\Lambda_{k}(L(\lambda))$ there exist a disc $S(z_{0},\varepsilon)$, with $\varepsilon>0$ and two
angles $\theta_{1}< \theta_{2}$, with $\theta_{1}, \theta_{2}\in[0, 2\pi)$, such that
\[
\RE{(e^{i\theta}z_{0})}=\max{\set{\RE z : e^{-i\theta}z\in\Lambda_{k}^{(s)}(L(\lambda))\cap S(z_{0},\varepsilon)}}\quad
\forall \,\,\theta\in(\theta_{1}, \theta_{2}).
\]
The following proposition presents a condition for a boundary point of $w(L(\lambda))$ to be a boundary point of
$\Lambda_{k}(L(\lambda))$, as well. We should remark that the term 'multiplicity' as mentioned below is referred to the
\emph{algebraic multiplicity} of an eigenvalue.
\begin{prop}\label{pr8}
Let the $n\times n$ matrix polynomial $L(\lambda)$. If $\gamma\in\sigma(L(\lambda))\cap\partial w(L(\lambda))$ with
multiplicity $k$, then for $j=2, \ldots, k$
\[
\gamma\in\partial \Lambda_{j}(L(\lambda)).
\]
\end{prop}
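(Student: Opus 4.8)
The plan is to reduce everything to a statement about the zero at $\gamma$ and then exploit the description of $\Lambda_j$ as an intersection of numerical ranges of compressions (Proposition~\ref{pr5}). First I would translate the hypothesis: $\gamma\in\sigma(L(\lambda))$ with algebraic multiplicity $k$ means that in a unitary (or Smith-type) reduction of $L(\lambda)$ near $\gamma$, the matrix $L(\gamma)$ has rank deficiency accounting for $k$ (counting the Jordan structure), so that there is a $k$-dimensional subspace $\mathcal K$ on which $L(\gamma)$ acts as the zero form; concretely, by Proposition~\ref{pr2}(v)--(vi) the condition $\gamma\in\Lambda_k(L(\lambda))$ already follows from multiplicity $k$, hence $\gamma\in\Lambda_j(L(\lambda))$ for all $j\le k$ by Proposition~\ref{pr3}. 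So the only real content is that $\gamma$ lies on the \emph{boundary} of $\Lambda_j(L(\lambda))$, not in its interior.

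For the boundary claim I would argue by contradiction: suppose $\gamma\in\operatorname{int}\Lambda_j(L(\lambda))$ for some $j$ with $2\le j\le k$. Since $\Lambda_j(L(\lambda))\subseteq\Lambda_1(L(\lambda))=w(L(\lambda))$ by Proposition~\ref{pr3}, an interior point of $\Lambda_j(L(\lambda))$ would be an interior point of $w(L(\lambda))$, directly contradicting $\gamma\in\partial w(L(\lambda))$. This is the clean core of the argument, and it shows the inclusion chain does most of the work. The subtlety is that one must make sure $\gamma$ genuinely belongs to $\Lambda_j(L(\lambda))$ (so that ``boundary'' is the right word and the set is nonempty near $\gamma$); this is where the multiplicity hypothesis $k\ge j$ is used, via the argument in the previous paragraph.

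To make the multiplicity step rigorous I would invoke the equivalence $\gamma\in\Lambda_k(L(\lambda))\iff 0\in\Lambda_k(L(\gamma))$ together with Proposition~\ref{pr2}: since $\gamma$ is an eigenvalue of algebraic multiplicity $k$, there exist (possibly after passing to a Jordan chain / using the local Smith form of $L(\lambda)$ at $\gamma$, as in \cite{Li-Rodman}) orthonormal vectors $u_1,\dots,u_k$ with $u_i^*L(\gamma)u_j=0$ for all $i,j$ --- equivalently an $L(\gamma)$-isotropic $k$-dimensional subspace --- which is exactly condition (iv)/(v). Hence $0\in\Lambda_k(L(\gamma))$, so $\gamma\in\Lambda_k(L(\lambda))\subseteq\Lambda_j(L(\lambda))$. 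Combined with $\gamma\in\partial w(L(\lambda))$ and the inclusion $\Lambda_j(L(\lambda))\subseteq w(L(\lambda))$, the point $\gamma$ cannot be interior to $\Lambda_j(L(\lambda))$, so $\gamma\in\partial\Lambda_j(L(\lambda))$.

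The main obstacle I anticipate is the passage from ``algebraic multiplicity $k$ as an eigenvalue of the polynomial $L(\lambda)$'' to ``existence of a $k$-dimensional $L(\gamma)$-isotropic subspace.'' For a linear pencil $A\lambda-B$ this is transparent (eigenvectors span such a subspace, and for a Jordan block one checks the bilinear form vanishes on the generalized eigenspace), but for a genuine matrix polynomial one should be careful to phrase it via the local Smith form or via the companion linearization $C_L(\lambda)$ (whose spectrum and multiplicities match those of $L(\lambda)$), and then transfer the isotropic subspace back; alternatively one restricts, as the statement of the companion proposition suggests, to the pencil case $L(\lambda)=A\lambda-B$ where the sharp-point terminology is cleanest. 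I would therefore present the eigenvalue/isotropy step carefully (citing \cite{Li-Rodman}), and keep the boundary argument itself as the short contradiction above.
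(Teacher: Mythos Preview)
Your overall architecture matches the paper's: first show $\gamma\in\Lambda_{k}(L(\lambda))$, then use $\Lambda_{j}(L(\lambda))\subseteq w(L(\lambda))$ together with $\gamma\in\partial w(L(\lambda))$ to rule out interiority. The second step is exactly what the paper does.

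The gap is in the first step. You treat ``$\gamma$ has algebraic multiplicity $k$'' as sufficient by itself to produce a $k$-dimensional $L(\gamma)$-isotropic subspace, proposing to extract it from Jordan chains or the local Smith form, and you call this ``transparent'' for pencils. It is not: take the pencil $I\lambda-J_{k}(0)$ with $J_{k}(0)$ a single $k\times k$ nilpotent Jordan block. The eigenvalue $0$ has algebraic multiplicity $k$, yet $0\notin\Lambda_{k}(J_{k}(0))$ because $J_{k}(0)$ is not a scalar matrix; the generalized eigenspace is all of $\Complex^{k}$ and the bilinear form $u^{*}J_{k}(0)v$ certainly does not vanish there. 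So neither Jordan chains nor the Smith form will give you the isotropic subspace.

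What rescues the argument is precisely the hypothesis you set aside for this step: $\gamma\in\partial w(L(\lambda))$. An eigenvalue on the boundary of the numerical range is \emph{seminormal} in the sense of Lancaster--Psarrakos \cite[Th.~6]{L-P}, and that result furnishes a unitary $U$ with $U^{*}L(\gamma)U=0_{k}\oplus R(\gamma)$. This is how the paper obtains $\gamma\in\Lambda_{k}(L(\lambda))$ (via Proposition~\ref{pr2}(vi)); the boundary assumption is doing double duty, and your proof needs to invoke it here as well rather than only at the end.
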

\begin{proof}
Clearly, by the assumption, $\gamma$ is seminormal eigenvalue of the matrix polynomial $L(\lambda)$ of multiplicity $k$ \cite[Th.6]{L-P}.
That is, there exists a unitary matrix $U$ such that
\[
U^{*}L(\gamma)U=0_{k}\oplus R(\gamma),
\]
where $R(\lambda)$ is an $(n-k)\times(n-k)$ matrix polynomial and $\gamma\notin\textrm{int}w(R(\lambda))$.
Hence, by Propositions \ref{pr2}(vi) and \ref{pr3}, it is implied that
$\gamma\in\Lambda_{j}(L(\lambda))\subseteq\Lambda_{j-1}(L(\lambda))$ for $j=2, \ldots, k$ and due to
$\gamma\notin \textrm{int} w(L(\lambda))$ $(\equiv \textrm{int}\Lambda_{1}(L(\lambda))$, we obtain
$\gamma\in\partial\Lambda_{j}(L(\lambda))$, for $j=2, \ldots, k$.
\end{proof}
For the pencil  $I\lambda-A$, we obtain the following corollary.
\begin{cor}\label{pr9}
Let $A\in\M_{n}(\Complex)$. If $\gamma\in\partial F(A)$ is eigenvalue of $A$ of multiplicity $k$,
then
\[
\gamma\in\partial \Lambda_{j}(A), \quad j=2, \ldots, k.
\]
\end{cor}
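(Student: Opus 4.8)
The plan is to derive Corollary~\ref{pr9} directly from Proposition~\ref{pr8} by specializing to the linear pencil $L(\lambda)=I\lambda-A$. First I would observe that in this case $\sigma(L(\lambda))$ is exactly the spectrum $\sigma(A)$, and $w(L(\lambda))=\Lambda_{1}(I\lambda-A)=F(A)$, the classical numerical range of $A$, as recorded in \eqref{rel3}--\eqref{rel4}. Moreover the notion of ``algebraic multiplicity of $\gamma$ as an eigenvalue of the pencil $I\lambda-A$'' coincides with the usual algebraic multiplicity of $\gamma$ as an eigenvalue of the matrix $A$, since the elementary divisors of $I\lambda-A$ are precisely those of $A$.

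With these identifications in hand, the hypothesis of Corollary~\ref{pr9} --- namely $\gamma\in\partial F(A)$ is an eigenvalue of $A$ of multiplicity $k$ --- translates verbatim into the hypothesis of Proposition~\ref{pr8}: $\gamma\in\sigma(L(\lambda))\cap\partial w(L(\lambda))$ with multiplicity $k$. I would then simply invoke Proposition~\ref{pr8} to conclude $\gamma\in\partial\Lambda_{j}(L(\lambda))=\partial\Lambda_{j}(I\lambda-A)=\partial\Lambda_{j}(A)$ for $j=2,\ldots,k$, where the last equality again uses \eqref{rel3}. That completes the argument.

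I do not anticipate a serious obstacle here, since the corollary is a pure specialization. The only point that deserves a sentence of care is the equivalence of the two notions of multiplicity: for a general matrix polynomial $L(\lambda)$ the ``algebraic multiplicity'' of $\gamma\in\sigma(L(\lambda))$ is defined via the local Smith form (the number of elementary divisors at $\gamma$, counted with their degrees), and one must note that for $L(\lambda)=I\lambda-A$ this reduces to the classical eigenvalue multiplicity of $A$. Once that is remarked, the rest is immediate from Proposition~\ref{pr8}, so the proof is essentially one line of translation followed by an application of the preceding proposition.

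\begin{proof}
Apply Proposition~\ref{pr8} to the linear pencil $L(\lambda)=I\lambda-A$. By \eqref{rel3}, $\Lambda_{j}(I\lambda-A)=\Lambda_{j}(A)$ for every $j$, and in particular $w(I\lambda-A)=\Lambda_{1}(I\lambda-A)=F(A)$ by \eqref{rel4}; also $\sigma(I\lambda-A)=\sigma(A)$, and the algebraic multiplicity of $\gamma$ as an eigenvalue of the pencil $I\lambda-A$ equals its algebraic multiplicity as an eigenvalue of $A$, since $I\lambda-A$ and $A$ share the same elementary divisors. Hence the hypothesis $\gamma\in\partial F(A)$ with $\gamma$ an eigenvalue of $A$ of multiplicity $k$ is precisely the hypothesis $\gamma\in\sigma(L(\lambda))\cap\partial w(L(\lambda))$ with multiplicity $k$. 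Proposition~\ref{pr8} then yields $\gamma\in\partial\Lambda_{j}(L(\lambda))=\partial\Lambda_{j}(A)$ for $j=2,\ldots,k$.
\end{proof}
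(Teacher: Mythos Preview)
Your proposal is correct and matches the paper's approach exactly: the paper presents Corollary~\ref{pr9} as an immediate specialization of Proposition~\ref{pr8} to the pencil $I\lambda-A$, without even giving a separate proof. Your additional remarks on the equivalence of the multiplicity notions are correct and simply make explicit what the paper leaves implicit.
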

The converse of Corollary \ref{pr9} and consequently of Proposition \ref{pr8} is not true, as it is illustrated in the next example.\\\\
\textbf{Example 4.}\\
Let $A=diag(3+4i, 4-i, -3-2i, -3, -3+3i)$. The outer polygon of the figure is $F(A)$, whereas the inner
shaded polygon  is
$\Lambda_{2}(A)$, which is the intersection of all $\begin{pmatrix}
                                                      5 \\
                                                      4 \\
                                                    \end{pmatrix}$ convex combinations of the eigenvalues
$\lambda_{j_{1}}, \lambda_{j_{2}}, \lambda_{j_{3}}, \lambda_{j_{4}}$ of $A$, with $1\leq j_{1}\leq\ldots\leq j_{4}\leq 5$.
Notice that $\lambda_{0}=-3 \in \partial F(A)\cap\partial\Lambda_{2}(A)$, but it is a simple eigenvalue of matrix $A$.
In addition, $\Lambda_{3}(A)=\emptyset$.

\begin{center}
\includegraphics[width=0.35\textwidth]{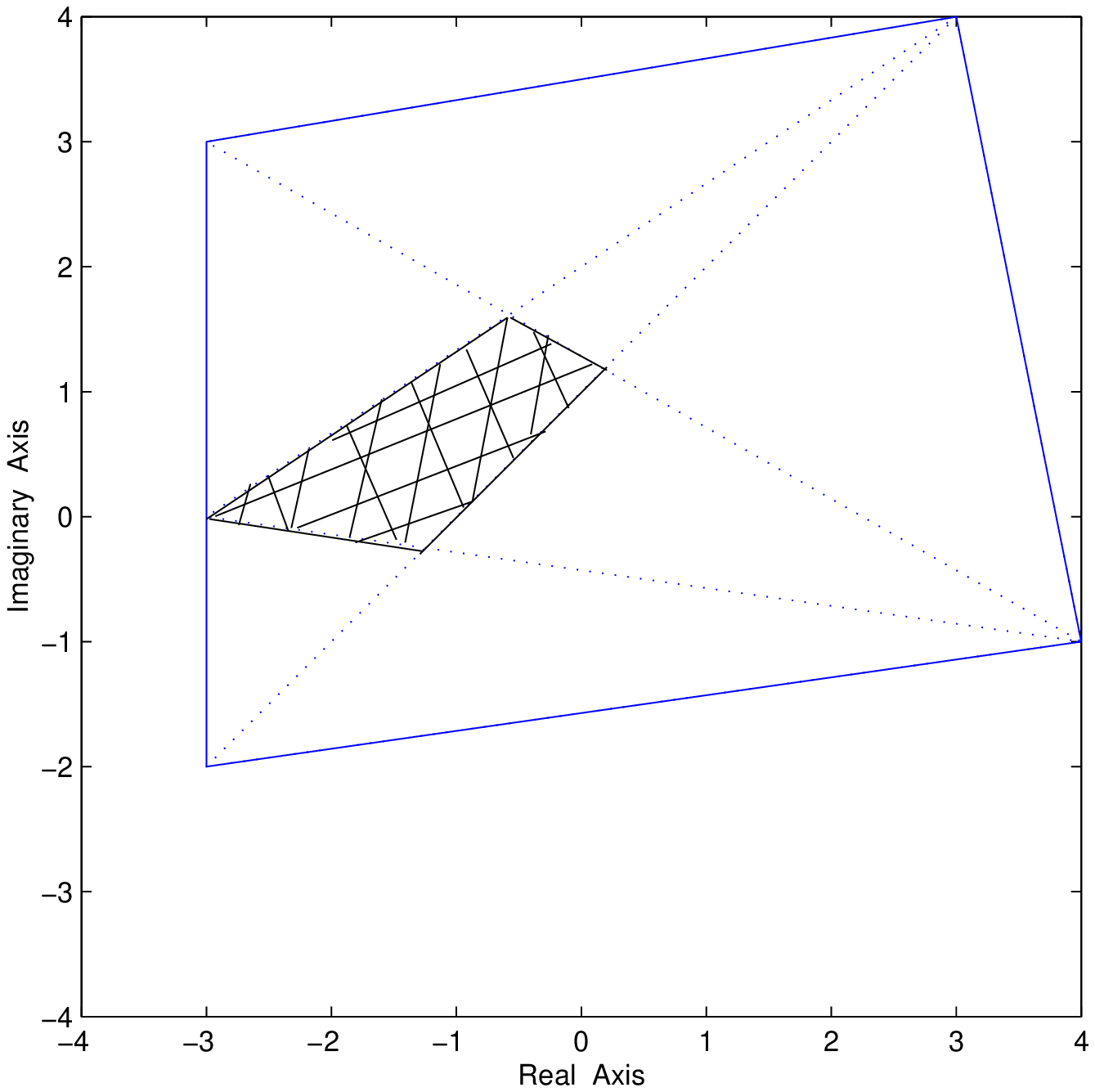}                                       
\end{center}

In view of the definition of sharp points, for a pencil $A\lambda-B$,
we have the next proposition.
\begin{prop}\label{pr10}
Let the  pencil $L(\lambda)=A\lambda-B\in\M_{n}(\Complex)$ and $z_{0}$ be a sharp point of
$w(A\lambda-B)$ of multiplicity $k$ with respect to the spectrum $\sigma(A\lambda-B)$, then $z_{0}$ is also a sharp point of
$\Lambda_{j}(A\lambda-B)$, for $j=2, \ldots, k$.
\end{prop}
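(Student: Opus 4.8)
The plan is to reduce the statement about $\Lambda_j(A\lambda-B)$ to what we already know about the spectrum and the numerical range by exploiting the special structure available at a sharp point of $w(A\lambda-B)$ of multiplicity $k$. First I would recall, in the spirit of Proposition \ref{pr8}, that a sharp point $z_0$ of $w(A\lambda-B)$ which has multiplicity $k$ with respect to $\sigma(A\lambda-B)$ is in particular an eigenvalue of multiplicity $k$ lying on $\partial w(A\lambda-B)$; hence, invoking the seminormality result (\cite[Th.6]{L-P}) as in the proof of Proposition \ref{pr8}, there is a unitary $U$ with
\[
U^{*}(A z_0-B)U = 0_{k}\oplus R(z_0),
\]
where $R(\lambda)=A'\lambda-B'$ is an $(n-k)\times(n-k)$ pencil and $z_0\notin\mathrm{int}\,w(R(\lambda))$. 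By Proposition \ref{pr2}(vi) together with Proposition \ref{pr3} this already gives $z_0\in\Lambda_{j}(A\lambda-B)$ for $j=2,\ldots,k$, and since $z_0\notin\mathrm{int}\,w(A\lambda-B)=\mathrm{int}\,\Lambda_1(A\lambda-B)$ and $\Lambda_j\subseteq\Lambda_1$, we get $z_0\in\partial\Lambda_{j}(A\lambda-B)$. So the containment and boundary part are free; the real content is the \emph{sharpness}, i.e. producing the wedge of directions $\theta\in(\theta_1,\theta_2)$ for which $z_0$ is the rightmost point of $\Lambda_j^{(s)}(A\lambda-B)\cap S(z_0,\varepsilon)$ after rotation.

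Next I would transfer the sharp-point data from $w$ to a local "support wedge". Since $z_0$ is a sharp point of $w(A\lambda-B)$, there are angles $\theta_1<\theta_2$ and $\varepsilon>0$ with $\RE(e^{i\theta}z_0)=\max\{\RE z: e^{-i\theta}z\in w(A\lambda-B)\cap S(z_0,\varepsilon)\}$ for all $\theta\in(\theta_1,\theta_2)$. Because $\Lambda_j(A\lambda-B)\subseteq\Lambda_1(A\lambda-B)=w(A\lambda-B)$, the same inequality $\RE(e^{i\theta}z_0)\ge\RE z$ holds a fortiori for every $z$ with $e^{-i\theta}z\in\Lambda_j(A\lambda-B)\cap S(z_0,\varepsilon)$ and every $\theta\in(\theta_1,\theta_2)$. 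Thus the wedge inherited from $w(A\lambda-B)$ is automatically a "supporting wedge" for $\Lambda_j$ as well; what remains is only to check that $z_0$ is actually \emph{attained} as that maximum within the relevant connected component $\Lambda_j^{(s)}(A\lambda-B)$, which is exactly the membership $z_0\in\Lambda_j(A\lambda-B)$ established in the first paragraph. Combining, for each $\theta\in(\theta_1,\theta_2)$ we have $e^{-i\theta}z_0\in\Lambda_j^{(s)}$ and $\RE(e^{i\theta}z_0)=\max\{\RE z: e^{-i\theta}z\in\Lambda_j^{(s)}(A\lambda-B)\cap S(z_0,\varepsilon)\}$, which is precisely the definition of $z_0$ being a sharp point of $\Lambda_j(A\lambda-B)$.

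The one genuine subtlety — and I expect this to be the main obstacle — is ensuring $z_0$ lies on the boundary of a single connected component $\Lambda_j^{(s)}$ on which the wedge condition is read, and that the local picture near $z_0$ inside $S(z_0,\varepsilon)$ is governed by that component alone; a priori $\Lambda_j(A\lambda-B)$ can be disconnected (as Example 1 and Example 3 show), so one must argue that shrinking $\varepsilon$ isolates the component through $z_0$, and that the maximum in the definition is over that component. This is handled by first choosing $\varepsilon$ small enough that $S(z_0,\varepsilon)$ meets only the component(s) whose closure contains $z_0$, then using that $z_0$ itself belongs to $\Lambda_j$, so at least one such component exists; the supporting-wedge inequality from $w$ then forces $z_0$ to be the rightmost point of \emph{that} component's local piece for each rotation angle in $(\theta_1,\theta_2)$. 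I would also remark that the multiplicity hypothesis "$k$ with respect to $\sigma(A\lambda-B)$" is used exactly once, to invoke seminormality and hence the block decomposition, mirroring Proposition \ref{pr8}; for $j>k$ the argument would break down precisely because the $0_k$ block is no longer large enough to force $z_0\in\Lambda_j$. Finally, I would note the symmetric degenerate case (if $A\lambda-B$ has a totally isotropic common subspace of dimension $k$, Proposition 2.1(d) makes $\Lambda_k=\Complex$ and the statement is vacuous or trivial), so one may assume $\Lambda_j(A\lambda-B)\neq\Complex$ throughout.
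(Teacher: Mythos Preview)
Your argument is correct and follows the same two-step skeleton as the paper: first place $z_0\in\partial\Lambda_j(A\lambda-B)$ via Proposition~\ref{pr8} (seminormality), then inherit the supporting wedge from $w(A\lambda-B)$ using $\Lambda_j\subseteq w$, with attainment of the maximum coming from $z_0\in\Lambda_j$. The paper differs only in how it realizes the attainment step: instead of citing $z_0\in\Lambda_j$ from Proposition~\ref{pr8}, it rewrites $\Lambda_j$ as $\bigcap_M w(M^{*}(A\lambda-B)M)$ via Proposition~\ref{pr5} and then shows directly, by the dimension count $\dim\ker(Az_0-B)+\dim\mathrm{Im}(MM^{*})=k+(n-j+1)\geq n+1$, that an eigenvector at $z_0$ lies in the range of every projection $MM^{*}$, so $z_0\in\sigma(M^{*}(A\lambda-B)M)\subseteq w(M^{*}(A\lambda-B)M)$ for \emph{all} isometries $M$. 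This is effectively an independent, self-contained proof of $z_0\in\Lambda_j$ that avoids the seminormality citation; your route is shorter because you reuse Proposition~\ref{pr8}, while the paper's dimension argument buys an explicit statement that $z_0$ survives every $(n-j+1)$-dimensional compression. The paper's proof does not explicitly treat the connected-component issue in the sharp-point definition that you flag; your handling of that point is more careful than the paper's.
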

\begin{proof}
Since the sharp point $z_{0}$ of $w(A\lambda-B)$ is also an eigenvalue of the pencil $A\lambda-B$ \cite[Th.1.3]{Mar-Psar},
with multiplicity $k$ by hypothesis, we deduce by Proposition \ref{pr8} that $z_{0}\in\partial\Lambda_{j}(A\lambda-B)$,
for $j=2, \ldots, k$. It only suffices to prove that for any disc
$S(z_{0},\varepsilon)$ with $\varepsilon>0$, $z_{0}$ satisfies the equality
\[
\RE(e^{i\theta}z_{0})=\max{\set{\RE z : e^{-i\theta}z\in\Lambda_{j}(A\lambda-B)\cap S(z_{0},\varepsilon)}}
\]
or equivalently, due to Proposition \ref{pr5}
\[
\RE(e^{i\theta}z_{0})=\max{\set{\RE z : z\in \bigcap_{M}{\left(w(e^{i\theta}M^{*}(A\lambda-B)M)\cap S(e^{i\theta}z_{0},\varepsilon)\right)}}}
\]
for every angle $\theta\in(\theta_{1}, \theta_{2})$ with $0\leq\theta_{1}< \theta_{2}<2\pi$.

The inclusion relation $w(M^{*}(A\lambda-B)M)\subseteq w(A\lambda-B)$ for any $n\times(n-j+1)$ isometry $M$, $j=2, \ldots, k$
verifies the inequality
\begin{equation}\label{eq3}
\max_{\bigcap_{M}{\left(w(e^{i\theta}M^{*}(A\lambda-B)M)\cap S(e^{i\theta}z_{0},\varepsilon)\right)}}{\RE z}\leq
\max_{w(e^{i\theta}(A\lambda-B))\cap S(e^{i\theta}z_{0},\varepsilon)}{\RE z}=\RE(e^{i\theta}z_{0})
\end{equation}
for any disc $S(e^{i\theta}z_{0},\varepsilon)$ and every $\theta\in(\theta_{1}, \theta_{2})$.

Moreover, $\ker{(Az_{0}-B)}\cap \IM{(MM^{*})}\neq\emptyset$, since
$\dim{\ker{(Az_{0}-B)}} + \dim{\IM{(MM^{*})}}=k+n-j+1\geq n+1$. Therefore, for an eigenvector $x_{0}\in\Complex^{n}$ of $A\lambda-B$
corresponding to $z_{0}$ there exists
a vector $y_{0}\in\Complex^{n}$ such that $x_{0}=MM^{*}y_{0}$. Obviously, $M^{*}y_{0}\in\Complex^{n-j+1}$ is an eigenvector of
$M^{*}(A\lambda-B)M$ corresponding to $z_{0}$, yielding $z_{0}\in\sigma(M^{*}(A\lambda-B)M)\subseteq w(M^{*}(A\lambda-B)M)$ for
any $n\times(n-j+1)$ isometry M.

Thus, $z_{0}\in \bigcap_{M}{w(M^{*}(A\lambda-B)M)}$, i.e. $\RE z_{0}\in\RE (\bigcap_{M}{w(M^{*}(A\lambda-B)M)})$, whereupon
we confirm the inequality
\begin{equation}\label{eq4}
\RE{(e^{i\theta}z_{0})}\leq\max_{\bigcap_{M}{\left(w(e^{i\theta}M^{*}(A\lambda-B)M)\cap S(e^{i\theta}z_{0},\varepsilon)\right)}}{\RE z}
\end{equation}
for any disc $S(e^{i\theta}z_{0},\varepsilon)$ and every $\theta\in(\theta_{1}, \theta_{2})$.
Therefore, by \eqref{eq3} and \eqref{eq4}
\[
\RE{(e^{i\theta}z_{0})}=\max{\set{\RE z : z\in\bigcap_{M}{\left(w(e^{i\theta}M^{*}(A\lambda-B)M)\cap S(e^{i\theta}z_{0},\varepsilon)\right)}}}
\]
for any disc $S(e^{i\theta}z_{0},\varepsilon)$ and every $\theta\in(\theta_{1}, \theta_{2})$, establishing the assertion.
\end{proof}
Because of the previous results, we obtain an interesting corollary concerning the sharp points of the higher rank numerical range
of a matrix $A\in\M_{n}(\Complex)$.
\begin{cor}\label{pr11}
Let $A\in\M_{n}(\Complex)$ and $z_{0}\in\partial F(A)$ be a sharp point of $F(A)$ of multiplicity $k$ with respect to $\sigma(A)$,
then $z_{0}$ is also a sharp point of $\Lambda_{j}(A)$, for $j=2, \ldots, k$.
\end{cor}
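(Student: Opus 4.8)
The plan is to reduce the statement about $F(A)$ and $\Lambda_j(A)$ to the already-established Proposition \ref{pr10} about the pencil $A\lambda - B$, by choosing the pencil whose numerical range is $F(A)$. The natural choice is $B = A$, $A$-coefficient $= I_n$, i.e. the monic linear pencil $L(\lambda) = I_n\lambda - A$. First I would recall the identifications already set up in the paper: by \eqref{rel3}, $\Lambda_j(I\lambda - A)$ coincides with the higher rank numerical range $\Lambda_j(A)$ of the matrix $A$; and by \eqref{rel4} together with \eqref{rel2}, $w(I\lambda - A) = \Lambda_1(I\lambda-A) = F(A)$, since $x^*(I\mu - A)x = 0$ with $\|x\|=1$ exactly says $\mu = x^*Ax$. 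Likewise the spectrum $\sigma(I\lambda - A)$ is exactly $\sigma(A)$, and the algebraic multiplicity of $z_0$ as an eigenvalue of the pencil $I\lambda - A$ equals its algebraic multiplicity as an eigenvalue of $A$.

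Next I would check that the hypotheses transfer verbatim. Assume $z_0 \in \partial F(A)$ is a sharp point of $F(A)$ of multiplicity $k$ with respect to $\sigma(A)$. Under the identification $F(A) = w(I\lambda - A)$, the notion of sharp point for $F(A)$ (in the sense of \cite{Mar-Psar1}, via the two angles $\theta_1 < \theta_2$ and the local $\max \RE$ condition) is literally the notion of sharp point of $w(I\lambda - A)$; and "multiplicity $k$ with respect to $\sigma(A)$" becomes "multiplicity $k$ with respect to $\sigma(I\lambda - A)$". Hence $z_0$ is a sharp point of $w(I\lambda - A)$ of multiplicity $k$ with respect to $\sigma(I\lambda - A)$, which is precisely the hypothesis of Proposition \ref{pr10} applied to the pencil $A' = I_n$, $B' = A$.

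Then I would simply invoke Proposition \ref{pr10}: it yields that $z_0$ is a sharp point of $\Lambda_j(I\lambda - A)$ for $j = 2, \ldots, k$. Translating back through \eqref{rel3}, $\Lambda_j(I\lambda - A) = \Lambda_j(A)$, so $z_0$ is a sharp point of $\Lambda_j(A)$ for $j = 2, \ldots, k$, as claimed. The only point requiring a word of care — and I expect this to be the main (indeed the sole) obstacle — is to be explicit that the definition of sharp point given in Section 3 for $\Lambda_k(L(\lambda))$ specializes correctly to the classical sharp-point notion for $F(A)$ when $L(\lambda) = I\lambda - A$, and that Proposition \ref{pr10}'s use of Proposition \ref{pr5} (the description $\Lambda_k(L(\lambda)) = \bigcap_M w(M^*L(\lambda)M)$) is applicable here; this is immediate since $M^*(I_n\lambda - A)M = I_{n-j+1}\lambda - M^*AM$ is again a monic pencil, so all compressions behave as in the general argument. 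Everything else is a direct substitution into the already-proved proposition.
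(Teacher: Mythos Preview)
Your proposal is correct and follows exactly the approach the paper intends: the corollary is stated without proof as an immediate consequence of Proposition \ref{pr10} applied to the monic pencil $L(\lambda)=I_{n}\lambda-A$, using the identifications $w(I\lambda-A)=F(A)$, $\sigma(I\lambda-A)=\sigma(A)$, and $\Lambda_{j}(I\lambda-A)=\Lambda_{j}(A)$ from \eqref{rel3}--\eqref{rel4}. Your added remark that $M^{*}(I_{n}\lambda-A)M=I_{n-j+1}\lambda-M^{*}AM$ is again monic is a harmless sanity check but not needed beyond the direct invocation of Proposition \ref{pr10}.
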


Analogous statement to Proposition \ref{pr10} for the ''sharp points'' of $\Lambda_{j}(L(\lambda))$ we may confirm
taking into consideration Theorem 1.4 in \cite{Mar-Psar}.

\section{Connection between $\Lambda_{k}(L(\lambda))$ and $\Lambda_{k}(\textbf{A})$}

Let the matrix polynomial $L(\lambda)=\sum_{i=0}^{m}{A_{i}\lambda^{i}}$ and the corresponding $mn\times mn$ companion pencil
$$C_{L}(\lambda)=\begin{bmatrix}
                          I_{n} & 0 & 0 & \cdots & 0 \\
                          0 & I_{n} & 0 & \cdots & 0 \\
                          \vdots & & \ddots & \ddots & \vdots \\
                          0 & &  & & 0\\
                          0 & & \cdots & & A_{m} \\
                          \end{bmatrix}\lambda-\begin{bmatrix}
                          0 & I_{n} & 0 & \cdots & 0 \\
                          0 & 0 & I_{n} & \cdots & 0 \\
                          \vdots & & \ddots & \ddots & \vdots \\
                          0 & &  & & I_{n}\\
                          A_{0} & & \cdots & & A_{m-1} \\
                          \end{bmatrix},$$
well known as \emph{linearization} of $L(\lambda)$, since there exist suitable matrix polynomials $E(\lambda)$,
$F(\lambda)$ with constant nonzero determinants such that
\[
\begin{bmatrix}
  L(\lambda) & 0 \\
  0 & I_{n(m-1)} \\
\end{bmatrix}=E(\lambda)C_{L}(\lambda)F(\lambda).
\]

Next, we generalize a corresponding relation in \cite{Mar-Psar1} between the higher rank numerical ranges of
$L(\lambda)$ and $C_{L}(\lambda)$.
\begin{prop}
\quad\quad\quad$\Lambda_{k}(L(\lambda))\cup\set{0}\subseteq\Lambda_{k}(C_{L}(\lambda)).$
\end{prop}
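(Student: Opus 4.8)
The plan is to exploit the fact that membership in $\Lambda_{k}$ of a pencil or polynomial at a point $\mu$ is equivalent to $0$ lying in the corresponding higher rank numerical range of the evaluated matrix, and then to transport an isometry witnessing $\mu\in\Lambda_{k}(L(\lambda))$ into an isometry witnessing $\mu\in\Lambda_{k}(C_{L}(\lambda))$. Concretely, suppose $\mu\in\Lambda_{k}(L(\lambda))$, so by Proposition \ref{pr2}(iv) there are orthonormal vectors $u_{1},\dots,u_{k}\in\Complex^{n}$ with $u_{j}^{*}L(\mu)u_{i}=0$ for all $i,j$. Following the classical linearization bookkeeping, I would associate to each $u_{i}$ the ``Vandermonde-type'' vector
\[
\widehat{u}_{i}=\begin{bmatrix} u_{i}\\ \mu u_{i}\\ \vdots\\ \mu^{m-1}u_{i}\end{bmatrix}\in\Complex^{mn},
\]
normalize it to $v_{i}=\widehat{u}_{i}/\|\widehat{u}_{i}\|_{2}$, and take $Q=\begin{bmatrix} v_{1} & \cdots & v_{k}\end{bmatrix}$. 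The vectors $\widehat{u}_{i}$ are mutually orthogonal because their first blocks are the orthonormal $u_{i}$ already; but more robustly, $\widehat{u}_{i}^{*}\widehat{u}_{j}=\big(\sum_{l=0}^{m-1}|\mu|^{2l}\big)u_{i}^{*}u_{j}$, so $Q$ is an isometry.

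The heart of the argument is the direct computation that $Q^{*}C_{L}(\mu)Q=0_{k}$. Writing $C_{L}(\mu)=\mu D - E$ with $D,E$ the two block matrices displayed above, one checks blockwise that $(\mu D - E)\widehat{u}_{i}$ has its first $m-1$ block rows equal to $\mu\cdot(\mu^{l-1}u_{i}) - \mu^{l}u_{i}=0$ and its last block row equal to $\mu A_{m}(\mu^{m-1}u_{i}) - (A_{0}u_{i}+A_{1}\mu u_{i}+\cdots+A_{m-1}\mu^{m-1}u_{i}) = -L(\mu)u_{i}$ up to sign. Thus $C_{L}(\mu)\widehat{u}_{i}$ is the vector whose only nonzero block is $-L(\mu)u_{i}$ in the last slot, and therefore
\[
\widehat{u}_{j}^{*}C_{L}(\mu)\widehat{u}_{i} = -\,(\mu^{m-1}u_{j})^{*}L(\mu)u_{i} = -\,\overline{\mu}^{\,m-1}\,u_{j}^{*}L(\mu)u_{i}=0
\]
for all $i,j$, using the hypothesis. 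Dividing by the norms gives $Q^{*}C_{L}(\mu)Q=0_{k}$, i.e.\ $\mu\in\Lambda_{k}(C_{L}(\lambda))$ by \eqref{re11}. For the point $0$: either one argues that $C_{L}(0)=-E$ is singular with a large kernel (its first $n(m-1)$ coordinates being unconstrained except through shift blocks) admitting a $k$-dimensional totally isotropic subspace when $k\le n(m-1)$, or — cleaner — one notes that the block-diagonal structure of $E$ forces $0\in\Lambda_{k}(C_{L}(\lambda))$ directly by choosing $Q$ supported on a suitable coordinate block; in the borderline case this is where Proposition \ref{pr2}(v) or the totally isotropic subspace observation of Proposition 2.1(d) is invoked.

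I expect the main obstacle to be the endpoint $\set{0}$ rather than the generic inclusion: when $\mu=0$ the Vandermonde vector $\widehat{u}_{i}$ degenerates to $\begin{bmatrix} u_{i}^{T} & 0 & \cdots & 0\end{bmatrix}^{T}$, and one must verify separately that these still span an $L$-orthogonal, hence $C_{L}(0)$-orthogonal, subspace; this works because $C_{L}(0)=-E$ annihilates the first block coordinate on the shift rows and only the bottom row sees $A_{0}u_{i}$, whose pairing against the (zero) tail of $\widehat{u}_{j}$ vanishes automatically. A secondary technical point worth stating carefully is that one needs $k\le n$ for the $u_{i}$ to exist and $k\le mn$ for $Q$ to be a genuine $mn\times k$ isometry, both of which are harmless given the standing hypotheses. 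Aside from these bookkeeping checks, the proof is the single block computation above plful the equivalence ``$\mu\in\Lambda_{k}(\cdot)\iff 0\in\Lambda_{k}(\cdot(\mu))$'' already recorded in Proposition \ref{pr2}.
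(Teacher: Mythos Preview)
Your approach is correct and genuinely different from the paper's. The paper never constructs an explicit witness isometry; instead it combines the characterization $\Lambda_{k}(\cdot)=\bigcap_{M}w(M^{*}(\cdot)M)$ of Proposition~\ref{pr5} with the known $k=1$ inclusion $w(L(\lambda))\cup\set{0}\subseteq w(C_{L}(\lambda))$ from \cite{Mar-Psar1} and the identity $C_{M^{*}LM}(\lambda)=(I_{m}\otimes M)^{*}C_{L}(\lambda)(I_{m}\otimes M)$, then extends $I_{m}\otimes M$ to an $mn\times(mn-k+1)$ isometry and chains the resulting intersections. Your Vandermonde-block construction, by contrast, transports the $L(\mu)$-isotropic $k$-frame directly to a $C_{L}(\mu)$-isotropic one; this is self-contained, avoids both Proposition~\ref{pr5} and the prior $k=1$ result, and produces explicit witnesses, whereas the paper's route shows the inclusion as a formal consequence of the machinery already assembled in Section~2.

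Two cautions. First, your last-block computation only comes out proportional to $L(\mu)u_{i}$ if the bottom row of the subtracted companion block carries the signs $-A_{0},\ldots,-A_{m-1}$ (the standard convention, and the one for which $C_{L}$ is genuinely a linearization of $L$); the displayed companion in the paper appears to carry a sign slip there, and if taken literally the last block becomes $2A_{m}\mu^{m}u_{i}-L(\mu)u_{i}$, whose pairing with $\widehat{u}_{j}$ need not vanish. With the intended convention the block is $L(\mu)u_{i}$ and your argument goes through cleanly. Second, your treatment of $\set{0}$ can be compressed to one line: for $m\geq 2$ the $(1,1)$ block of $C_{L}(0)=-E$ is $0_{n}$, so the embedding of $\Complex^{k}$ onto the first $k$ coordinates of $\Complex^{mn}$ already furnishes an isometry $Q$ with $Q^{*}C_{L}(0)Q=0_{k}$, with no case analysis needed.
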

\begin{proof}
By Proposition \ref{pr5} and the relationship $w(L(\lambda))\cup\set{0}\subseteq w(C_{L}(\lambda))$ in \cite{Mar-Psar1}, we have
\begin{equation}\label{s1}
\Lambda_{k}(L(\lambda))\cup\set{0}=\left(\bigcap_{M}w(M^{*}L(\lambda)M)\right)\cup\set{0}\subseteq \bigcap_{M}w(C_{M^{*}LM}(\lambda)),
\end{equation}
where $M\in\M_{n,n-k+1}(\Complex)$, with $M^{*}M=I_{n-k+1}$ and $C_{M^{*}LM}(\lambda)$ is the linearization of the matrix polynomial
$M^{*}L(\lambda)M$. Since,
\begin{eqnarray*}
C_{M^{*}LM}(\lambda)& = & (I_{m}\otimes M)^{*}\begin{bmatrix}
                          \lambda I_{n} & -I_{n} & 0 & \cdots & 0 \\
                          0 & \lambda I_{n} & -I_{n} & \cdots & 0 \\
                          \vdots & & \ddots & \ddots & \vdots \\
                          0 & &  & & -I_{n}\\
                          A_{0} & & \cdots & & A_{m}\lambda+A_{m-1} \\
                          \end{bmatrix}
                          (I_{m}\otimes M)\\
& = & (I_{m}\otimes M)^{*}C_{L}(\lambda)(I_{m}\otimes M),
\end{eqnarray*}
considering the isometry $Q=\begin{bmatrix}I_{m}\otimes M & V \\
              \end{bmatrix}\in\M_{mn,mn-k+1}(\Complex)$, with $Q^{*}Q=I_{mn-k+1}$, we have
\begin{eqnarray}\label {s2}
\nonumber\bigcap_{M}w(C_{M^{*}LM}(\lambda))&=&\bigcap_{M}w((I_{m}\otimes M)^{*}C_{L}(\lambda)(I_{m}\otimes M))\\
&\subseteq&\bigcap_{Q}w(Q^{*}C_{L}(\lambda)Q)\subseteq\bigcap_{X}w(X^{*}C_{L}(\lambda)X)=
\Lambda_{k}(C_{L}(\lambda)),
\end{eqnarray}
where $X\in\M_{mn,mn-k+1}(\Complex)$ with $X^{*}X=I_{mn-k+1}$. Thus by \eqref{s1} and \eqref{s2} the proof is completed.
\end{proof}

Furthermore, $\Lambda_{k}(L(\lambda))$ appears to be associated with the joint higher rank numerical range $\Lambda_{k}(\textbf{A})$
of an $(m+1)$-tuple of $n\times n$ matrices $\textbf{A}=(A_{0}, A_{1}, \ldots, A_{m})$. In fact,
\begin{eqnarray*}
\Lambda_{k}(L(\lambda)) & = & \set{\lambda\in\Complex : PA_{m}P\lambda^{m}+ \ldots +PA_{1}P\lambda +PA_{0}P=0_{n}\,,\,\,\,P\in\mathcal{P}_{k}} \\
                        & \supseteq & \set{\lambda\in\Complex : (\mu_{m}\lambda^{m}+ \ldots +\mu_{1}\lambda +\mu_{0})P=0_{n}\,,\,\,\,
                        (\mu_{0}, \mu_{1}, \ldots, \mu_{m})\in\Lambda_{k}(\textbf{A})} \\
                        & = & \set{\lambda\in\Complex : \mu_{m}\lambda^{m}+ \ldots +\mu_{1}\lambda +\mu_{0}=0\,,\,\,\,
                        (\mu_{0}, \mu_{1}, \ldots, \mu_{m})\in\Lambda_{k}(\textbf{A})} \\
                        & = & \set{\lambda\in\Complex : \seq{(1,\lambda, \ldots, \lambda^{m}),\textbf{u}}=0\,,\,\,\,
                        \textbf{u}=(\mu_{0}, \mu_{1}, \ldots, \mu_{m})\in\Lambda_{k}(\textbf{A})}. \\
\end{eqnarray*}
The above inclusion justifies that $Q^{*}A_{j}Q$ may not be scalar matrices for  $j=0, \ldots, m$ and for
all isometries $Q\in\M_{n,k}(\Complex)$.

The notion of the joint spectrum in \cite{L-P}, leads to an extension of  Proposition \ref{pr8}.
\begin{prop}
Let $\mathbf{A}=(A_{0}, \ldots, A_{m})$ be an $(m+1)$-tuple of $n\times n$ matrices.
If $(\mu_{0}, \ldots, \mu_{m})\in\partial w(\mathbf{A})$ is a normal joint eigenvalue of $\mathbf{A}$ with geometric multiplicity $k$, then
\[
(\mu_{0}, \ldots, \mu_{m})\in\partial \Lambda_{j}(\mathbf{A}),\quad j=2, \ldots, k.
\]
\end{prop}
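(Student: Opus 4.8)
The plan is to mimic the structure of the proof of Proposition \ref{pr8}, replacing the scalar seminormal-eigenvalue decomposition coming from \cite[Th.6]{L-P} by its joint analogue. First I would invoke the notion of joint spectrum from \cite{L-P}: since $(\mu_{0},\ldots,\mu_{m})\in\partial w(\mathbf{A})$ is a normal joint eigenvalue of $\mathbf{A}$ of geometric multiplicity $k$, there exists a unitary $U\in\M_{n}(\Complex)$ such that
\[
U^{*}A_{i}U=\mu_{i}I_{k}\oplus B_{i},\qquad i=0,\ldots,m,
\]
for suitable $(n-k)\times(n-k)$ matrices $B_{i}$, and moreover $(\mu_{0},\ldots,\mu_{m})\notin\textrm{int}\,w(B_{0},\ldots,B_{m})$. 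The existence of this simultaneous block-diagonalization is exactly what "normal joint eigenvalue'' buys us, and it is the joint counterpart of the decomposition $U^{*}L(\gamma)U=0_{k}\oplus R(\gamma)$ used before.

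Next I would read off membership in the joint higher rank numerical range. Taking $P$ to be the orthogonal projection onto $U(\textrm{span}\{e_{1},\ldots,e_{k}\})$, we get $PA_{i}P=\mu_{i}P$ for all $i=0,\ldots,m$, i.e. $(\mu_{0},\ldots,\mu_{m})\in\Lambda_{k}(\mathbf{A})$. Then the nesting $\Lambda_{k}(\mathbf{A})\subseteq\Lambda_{k-1}(\mathbf{A})\subseteq\cdots\subseteq\Lambda_{1}(\mathbf{A})=w(\mathbf{A})$, proved exactly as in Proposition \ref{pr3} (if $PA_iP=\mu_iP$ for a rank-$k$ projection $P$, restrict to any rank-$(k-1)$ subprojection), shows $(\mu_{0},\ldots,\mu_{m})\in\Lambda_{j}(\mathbf{A})$ for every $j=2,\ldots,k$.

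It remains to upgrade membership to boundary membership. Since $(\mu_0,\ldots,\mu_m)\in\partial w(\mathbf{A})=\partial\Lambda_{1}(\mathbf{A})$, the point lies on the boundary of the largest set in the chain; because $\Lambda_{j}(\mathbf{A})\subseteq w(\mathbf{A})$, a point of $\Lambda_{j}(\mathbf{A})$ that is not in $\textrm{int}\,w(\mathbf{A})$ cannot be in $\textrm{int}\,\Lambda_{j}(\mathbf{A})$ either, hence it lies in $\partial\Lambda_{j}(\mathbf{A})$. Thus $(\mu_{0},\ldots,\mu_{m})\in\partial\Lambda_{j}(\mathbf{A})$ for $j=2,\ldots,k$, which is the claim.

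The main obstacle is the first step: one must be careful that the hypothesis "normal joint eigenvalue of geometric multiplicity $k$'' genuinely yields the simultaneous unitary block-diagonalization above, with the residual tuple $(B_{0},\ldots,B_{m})$ satisfying $(\mu_{0},\ldots,\mu_{m})\notin\textrm{int}\,w(B_{0},\ldots,B_{m})$; this is where the precise definition from \cite{L-P} and the boundary hypothesis $(\mu_{0},\ldots,\mu_{m})\in\partial w(\mathbf{A})$ must be combined, and it is the analogue of the seminormality input in Proposition \ref{pr8}. Everything after that is the routine projection/nesting argument already used twice in the paper.
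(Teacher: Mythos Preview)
Your proposal is correct and follows essentially the same route as the paper: invoke the simultaneous unitary block-diagonalization $U^{*}A_{i}U=\mu_{i}I_{k}\oplus B_{i}$ coming from the definition of a normal joint eigenvalue, read off $(\mu_{0},\ldots,\mu_{m})\in\Lambda_{k}(\mathbf{A})$, use the nesting $\Lambda_{j}(\mathbf{A})\subseteq\Lambda_{j-1}(\mathbf{A})$ (the paper cites \cite{Li-Poon} for this), and then combine membership with $(\mu_{0},\ldots,\mu_{m})\in\partial w(\mathbf{A})$ to force boundary membership in each $\Lambda_{j}(\mathbf{A})$. The only cosmetic difference is that the paper records the side condition $(\mu_{0},\ldots,\mu_{m})\notin\sigma(B_{0},\ldots,B_{m})$ rather than your $\notin\textrm{int}\,w(B_{0},\ldots,B_{m})$, but neither condition is actually used in the remainder of the argument, so your caution about this ``main obstacle'' is unnecessary here.
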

\begin{proof}
Since $(\mu_{0}, \ldots, \mu_{m})$ is a normal joint eigenvalue with geometric multiplicity $k$ \cite{L-P},
there exists a unitary matrix $U\in\M_{n}(\Complex)$ such that
\[
(U^{*}A_{0}U, \ldots, U^{*}A_{m}U)=(\mu_{0}I_{k}\oplus B_{0}, \ldots, \mu_{m}I_{k}\oplus B_{m}),
\]
where $(B_{0}, \ldots, B_{m})$ is an $(m+1)$-tuple of $(n-k)\times (n-k)$ matrices and $(\mu_{0}, \ldots, \mu_{m})\notin\sigma(B_{0}, \ldots, B_{m})$.
Thus, $(\mu_{0}, \ldots, \mu_{m})\in\Lambda_{k}(\mathbf{A})$. Since the point $(\mu_{0}, \ldots, \mu_{m})\in\partial w(\mathbf{A})$ and
$\Lambda_{j}(\mathbf{A})\subseteq\Lambda_{j-1}(\mathbf{A})$ for every $j=2, \ldots, k$ \cite{Li-Poon},
we establish $(\mu_{0}, \ldots, \mu_{m})\in\partial\Lambda_{j}(\mathbf{A})$ for all $j=2, \ldots, k$.
\end{proof}

Finally, we obtain the following result relative to that in \cite{Tsat-Psar}.

\begin{prop}
Let the matrix polynomial $L(\lambda)=\sum_{i=0}^{m}{A_{i}\lambda^{i}}$. Then
\[
\Lambda_{k}(L(\lambda))\supseteq\set{\lambda\in\Complex : \seq{(1,\lambda, \ldots, \lambda^{m}),\textbf{u}}=0\,,\,\,\,\textbf{u}\in
co\Lambda_{k}(\mathbf{A})},
\]
where $\mathbf{A}=(A_{0}, A_{1}, \ldots, A_{m})$ is the $(m+1)$-tuple of $n\times n$ matrices $A_{i}$.
\end{prop}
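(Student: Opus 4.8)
The plan is to deduce the stated inclusion from the one obtained in the display immediately preceding the statement, namely $\Lambda_{k}(L(\lambda))\supseteq\set{\lambda\in\Complex:\seq{(1,\lambda,\ldots,\lambda^{m}),\textbf{u}}=0,\ \textbf{u}\in\Lambda_{k}(\mathbf{A})}$, by enlarging $\Lambda_{k}(\mathbf{A})$ to its convex hull. The device that makes this possible is the fact that the higher rank numerical range $\Lambda_{k}(Iz-B)$ of a single matrix $B\in\M_{n}(\Complex)$ is convex \cite{Li-Sze,Hugo}.

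First I would fix $\lambda_{0}$ in the right-hand set and write $\textbf{u}=\sum_{i=1}^{r}t_{i}\textbf{u}^{(i)}$ as a finite convex combination, with $t_{i}>0$, $\sum_{i=1}^{r}t_{i}=1$ and $\textbf{u}^{(i)}=(\mu_{0}^{(i)},\ldots,\mu_{m}^{(i)})\in\Lambda_{k}(\mathbf{A})$; the defining condition then reads $\sum_{j=0}^{m}\big(\sum_{i=1}^{r}t_{i}\mu_{j}^{(i)}\big)\lambda_{0}^{j}=0$. By \eqref{rel5}, for each $i$ there is a projection $P_{i}\in\mathcal{P}_{k}$ with $P_{i}A_{j}P_{i}=\mu_{j}^{(i)}P_{i}$ for $j=0,\ldots,m$; multiplying the $j$-th identity by $\lambda_{0}^{j}$ and summing over $j$ gives
\[
P_{i}L(\lambda_{0})P_{i}=\Big(\sum_{j=0}^{m}\mu_{j}^{(i)}\lambda_{0}^{j}\Big)P_{i}=c_{i}P_{i},\qquad c_{i}:=\sum_{j=0}^{m}\mu_{j}^{(i)}\lambda_{0}^{j}\in\Complex .
\]
Hence each $c_{i}$ belongs to the higher rank numerical range $\Lambda_{k}(Iz-B)$ of the fixed matrix $B:=L(\lambda_{0})$, and $\sum_{i=1}^{r}t_{i}c_{i}=\sum_{j=0}^{m}\big(\sum_{i=1}^{r}t_{i}\mu_{j}^{(i)}\big)\lambda_{0}^{j}=0$.

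Finally I would invoke convexity: since $c_{1},\ldots,c_{r}\in\Lambda_{k}(Iz-B)$ and $\Lambda_{k}(Iz-B)$ is convex, the convex combination $0=\sum_{i=1}^{r}t_{i}c_{i}$ also lies in $\Lambda_{k}(Iz-B)$, so there is $P\in\mathcal{P}_{k}$ with $PBP=0\cdot P=0_{n}$, i.e. $PL(\lambda_{0})P=0_{n}$; by \eqref{rel1} this means $\lambda_{0}\in\Lambda_{k}(L(\lambda))$, which establishes the inclusion. The argument is short once this viewpoint is adopted, and the only non-elementary ingredient — hence the principal obstacle, if any — is precisely the convexity of $\Lambda_{k}(Iz-B)$ for a single matrix, which converts the relation $\sum_{i}t_{i}c_{i}=0$ with all $c_{i}\in\Lambda_{k}(Iz-B)$ into $0\in\Lambda_{k}(Iz-B)$. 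A minor point to record along the way is that $\seq{(1,\lambda_{0},\ldots,\lambda_{0}^{m}),\cdot}$ distributes over the nonnegative real combination defining $co\,\Lambda_{k}(\mathbf{A})$, which is immediate since the coefficients $t_{i}$ are real.
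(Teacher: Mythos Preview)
Your proof is correct and follows essentially the same route as the paper: pick $\lambda_{0}$, express $\mathbf{u}$ as a finite convex combination of points $\mathbf{u}^{(i)}\in\Lambda_{k}(\mathbf{A})$, observe that the associated scalars $c_{i}=\sum_{j}\mu_{j}^{(i)}\lambda_{0}^{j}$ all lie in $\Lambda_{k}(L(\lambda_{0}))$, and then use the convexity of the higher rank numerical range of the single matrix $L(\lambda_{0})$ to conclude $0\in\Lambda_{k}(L(\lambda_{0}))$. The only cosmetic difference is that the paper invokes Carath\'eodory's theorem to bound the number of summands by $2m+3$, but this bound plays no role in the argument and your appeal to an arbitrary finite convex combination is perfectly adequate.
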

\begin{proof}
Let $\Omega=\set{\lambda\in\Complex : \seq{(1,\lambda, \ldots, \lambda^{m}),\mathbf{u}}=0\,,\,\mathbf{u}\in co\Lambda_{k}(\mathbf{A})}$.
To prove the inclusion, suppose $\lambda_{0}\in\Omega$, that is
\begin{equation}\label{eq10}
\seq{(1,\lambda_{0}, \ldots, \lambda_{0}^{m}),\textbf{u}}=0
\end{equation}
for some $\textbf{u}=(u_{0}, u_{1}, \ldots, u_{m})\in co\Lambda_{k}(\textbf{A})$. We have $\Lambda_{k}(\textbf{A})\subseteq\Complex^{m+1}\equiv
\Real^{2m+2}$ and by Caratheodory's theorem in Convex Analysis, there are at most $2m+3$ elements of $\Lambda_{k}(\textbf{A})$ such that
\[
co\Lambda_{k}(\textbf{A})=\set{\sum_{j=1}^{\rho}{\mu_{j}\textbf{u}_{j}} :
\textbf{u}_{j}\in\Lambda_{k}(\textbf{A}),\,\,\mu_{j}\geq 0\,,\,\sum_{j=1}^{\rho}{\mu_{j}}=1,\,with\,\,\rho\leq 2m+3}.
\]
Hence, for $\textbf{u}=(u_{0}, u_{1}, \ldots, u_{m})\in co\Lambda_{k}(\textbf{A})$ there are suitable $\mu_{j}\geq 0$, $\sum_{j}^{\rho}{\mu_{j}}=1$,
$\rho\leq 2m+3$ \, such that
\begin{equation}\label{eq11}
\textbf{u}=\mu_{1}\textbf{u}_{1}+\ldots +\mu_{\rho}\textbf{u}_{\rho}=\mu_{1}\begin{bmatrix}
                       u_{10} \\
                       \vdots \\
                       u_{1m} \\
                     \end{bmatrix} + \ldots + \mu_{\rho}\begin{bmatrix}
                                                       u_{\rho 0} \\
                                                       \vdots \\
                                                       u_{\rho m} \\
                                                     \end{bmatrix},
\end{equation}
where $\textbf{u}_{j}=\begin{bmatrix}
u_{j0} & \ldots & u_{jm}\\
\end{bmatrix}^{T}\in\Lambda_{k}(\textbf{A})$, $j=1, \ldots, \rho$ and by equations \eqref{eq10} and \eqref{eq11}, we obtain:
\begin{eqnarray*}
\seq{(1,\lambda_{0}, \ldots, \lambda_{0}^{m}),\textbf{u}} & = &
\mu_{1}\begin{bmatrix}
  1 & \ldots & \lambda_{0}^{m} \\
\end{bmatrix}\begin{bmatrix}
                       u_{10} \\
                       \vdots \\
                       u_{1m} \\
                     \end{bmatrix} + \ldots + \mu_{\rho}\begin{bmatrix}
  1 & \ldots & \lambda_{0}^{m} \\
\end{bmatrix}\begin{bmatrix}
                        u_{\rho 0} \\
                        \vdots \\
                        u_{\rho m} \\
                         \end{bmatrix}\\
 \end{eqnarray*}
i.e.
\begin{equation}\label{eq12}
\mu_{1}p_{1}(\lambda_{0})+\ldots +\mu_{\rho}p_{\rho}(\lambda_{0})=0,
\end{equation}
where $p_{j}(\lambda)=u_{jm}\lambda^{m}+ \ldots +u_{j1}\lambda+u_{j0}$ for $j=1, \ldots, \rho$.
Evenly, by $\textbf{u}_{j}=(u_{j0}, \ldots, u_{jm})\in\Lambda_{k}(\textbf{A})$, there exist rank-$k$ orthogonal projections
$P_{j}$, $j=1, \ldots, \rho$ such that $P_{j}A_{i}P_{j}=u_{ji}P_{j}$, $i=0, \ldots, m$ and consequently
\begin{eqnarray*}
  p_{j}(\lambda_{0})P_{j} & = & u_{j0}P_{j}+u_{j1}P_{j}\lambda_{0}+\ldots +u_{jm}P_{j}\lambda_{0}^{m}\\
                          & = & P_{j}A_{0}P_{j}+P_{j}A_{1}P_{j}\lambda_{0}+\ldots+P_{j}A_{m}P_{j}\lambda_{0}^{m} \\
                          & = & P_{j}L(\lambda_{0})P_{j},
\end{eqnarray*}
which means that $p_{j}(\lambda_{0})\in\Lambda_{k}(L(\lambda_{0}))$. Due to the convexity of the higher rank numerical range of the matrix
$L(\lambda_{0})$ and the equation \eqref{eq12},
$0\in\Lambda_{k}(L(\lambda_{0}))$, equivalently $\lambda_{0}\in\Lambda_{k}(L(\lambda))$.
\end{proof}
\bibliographystyle{amsplain}
\small

\newpage
\end{document}